\documentclass[12pt]{amsart}
 
\pdfoutput=1
\usepackage[margin=1in]{geometry}
\usepackage{amsmath,amsthm,amssymb,amsrefs,bbm,color,esint,esvect,float,graphicx,mathrsfs}
\usepackage[bookmarksnumbered, colorlinks, plainpages,linkcolor=blue,anchorcolor=blue,citecolor=blue,urlcolor=blue]{hyperref}
\usepackage{amsmath,amssymb,amsthm}

\usepackage{epstopdf}
\AppendGraphicsExtensions{.tif}

\usepackage{enumitem}
\setlist[enumerate]{itemsep=3pt,topsep=4pt}
\setlist[itemize]{itemsep=3pt,topsep=4pt}

 \newtheorem{thm}{Theorem}[section]
 \newtheorem{lemma}{Lemma}[section]

\newtheorem{proposition}{Proposition}[section]

\numberwithin{equation}{section}

\begin{document}

\title[A dimension-free weak-type $(1,1)$ bound for the vector Riesz transform on $\mathbb{R}^n$]{A dimension-free weak-type $(1,1)$ bound for the vector Riesz transform on $\mathbb{R}^n$}

\author{Yuyuan Ouyang}
\address{Yuyuan Ouyang, School of Mathematical and Statistical Sciences, Clemson University, Clemson, SC 29634, USA}
\email{yuyuano@clemson.edu}

\author{Daniel Spector}
\address{Daniel Spector, Department of Mathematics, National Taiwan Normal University, No. 88, Section 4, Tingzhou Road, Wenshan District, Taipei City, Taiwan 116, R.O.C.
\newline
Department of Mathematics, University of Pittsburgh, Pittsburgh, PA 15261 USA
}
\email{spectda@gapps.ntnu.edu.tw}
\thanks{D. Spector is supported by the National Science and Technology Council of Taiwan under research grant number 113-2115-M-003-017-MY3.}

\author{Cody B. Stockdale}
\address{Cody B. Stockdale, School of Mathematical and Statistical Sciences, Clemson University, Clemson, SC 29634, USA}
\email{cbstock@clemson.edu}
\thanks{C. B. Stockdale is supported by the National Science Foundation, DMS grant No. 2555710}

\maketitle

\begin{abstract}
We show that the best constant in the weak-type $(1,1)$ bound for the vector Riesz transform on $\mathbb{R}^n$ is at most $2$, independent of the dimension $n$. The proof relies on a new decomposition of the input data involving an obstacle problem for the fractional Laplacian and an associated Lewy-Stampacchia type estimate on an unbounded domain. This settles a problem posed by E. M. Stein at the 1986 International Congress of Mathematicians.\looseness=-1
\end{abstract}


\section{Introduction}
Let $\mathbb{R}^n$ denote the Euclidean space of $n\in\mathbb{N}$ dimensions. For $j \in \{1,2,\ldots,n\}$,  define the $j^{\text{th}}$ Riesz transform of a function $f \in C^\infty_c(\mathbb{R}^n)$ by 
\begin{align*}
	R_jf(x):=\frac{\Gamma\left(\frac{n+1}{2}\right)}{\pi^{\frac{n+1}{2}}} p.v. \int_{\mathbb{R}^n}\frac{y_j-x_j}{|y-x|^{n+1}}f(y)\,dy.
\end{align*}
An outstanding open question of E. M. Stein \cite[Problem b on p.~203]{Stein86} asks whether the classical weak-type $(1,1)$ inequality
\begin{align}
    \|R_jf\|_{L^{1,\infty}(\mathbb{R}^n)} := \sup_{\lambda>0} \lambda |\{x \in \mathbb{R}^n\colon |R_j f(x)|>\lambda \}| \leq C_n \|f\|_{L^1(\mathbb{R}^n)}\label{wl1}
\end{align}
for all $f \in L^1(\mathbb{R}^n)$ can be established with a constant $C_n>0$ independent of the dimension $n$. \looseness=-1

The main result of this article is an affirmative answer to Stein's question. In fact, we prove the following stronger assertion for the full vector Riesz transform $Rf := (R_1f,R_2f,\ldots,R_nf)$. 
\begin{thm}\label{MainTheorem}
If $f \in L^1(\mathbb{R}^n)$, then 
\begin{align*}
	\|Rf\|_{L^{1,\infty}(\mathbb{R}^n)} 
\leq 2\|f\|_{L^1(\mathbb{R}^n)}.
\end{align*}
\end{thm}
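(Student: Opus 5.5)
By homogeneity we may take $\lambda=1$ and $f\in C^\infty_c(\mathbb{R}^n)$, and we must show $|\{|Rf|>1\}|\le 2\|f\|_{L^1}$. The key identity is that $R = \nabla(-\Delta)^{-1/2}$: writing $u=I_1 f = c_n |x|^{1-n}*f$, we have $Rf = -\nabla u$ (up to sign convention), and $u$ solves $(-\Delta)^{1/2}u=f$. Instead of a Calderón--Zygmund decomposition, whose constants grow with $n$, we decompose $f$ through an obstacle problem for $(-\Delta)^{1/2}$. Let $w$ be the least supersolution lying above a suitable obstacle built from $u$ at level $1$. Concretely, $w$ is the minimal function with $(-\Delta)^{1/2}w\ge 0$ and with $w$ dominating the part of the potential where the gradient is large. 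This splits $f=g+b$, where $g:=(-\Delta)^{1/2}w'$ is a ``good'' part and $b$ is a nonnegative measure supported on the contact set $\{w=\text{obstacle}\}$.

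The steps, in order, are as follows.
\begin{enumerate}
\item Set up the obstacle problem on all of $\mathbb{R}^n$, which is unbounded, with the decay condition at infinity given by $I_1|f|$. Establish existence and uniqueness of the solution via the Riesz potential and Perron's method for $(-\Delta)^{1/2}$, using harmonicity in one more variable through the Caffarelli--Silvestre extension.
\item Prove a Lewy--Stampacchia type estimate,
\[
0\le (-\Delta)^{1/2} w \le \bigl((-\Delta)^{1/2}\psi\bigr)^+ \chi_{\{w=\psi\}},
\]
and deduce that the total masses are controlled: $\|b\|\le \|f\|_{L^1}$ and $\|g\|_{L^1}\le \|f\|_{L^1}$.
\item Off the contact set, obtain a pointwise bound $|\nabla I_1 g|\le 1$, or an $L^2$ bound $\int |R g|^2\le \|g\|_{L^1}$, from the $L^\infty$ control inherited from the obstacle. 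Then Chebyshev together with the isometry $\|Rg\|_{L^2}=\|g\|_{L^2}$ gives $|\{|Rg|>1\}|\le \|g\|_{L^1}$, with constant exactly $1$ and no dependence on $n$.
\item Bound the contact set, or the set where $|Rb|$ is large, by $\|f\|_{L^1}$. I would do this by testing the measure $b$ against the capacitary/potential inequality, so that the set's measure is compared with $\int b$ via the superharmonicity of $I_1 b$. Summing the two contributions yields the constant $2$.
\end{enumerate}

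The main obstacle is making step 3 dimension-free, i.e.\ choosing the obstacle so that $g$ satisfies a bound of the form $\|g\|_{L^2}^2\le \|g\|_{L^1}$ (equivalently $|g|\le1$ in a suitable averaged sense), rather than the $2^n$-type bounds that cube averaging gives. I would first try the obstacle $\psi = $ the truncation of $u$ at the level where $|\nabla u|$ exceeds $1$, exploiting the fact that $(-\Delta)^{1/2}$ of a $1$-Lipschitz function is controlled in $L^\infty$ in a dimension-free way via the Poisson extension. The second delicate point is justifying the Lewy--Stampacchia estimate on the unbounded domain, which requires approximating by balls and passing to the limit using the decay of Riesz potentials.
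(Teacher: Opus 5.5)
Your global accounting is the right shape: an $L^2$/Chebyshev bound for a good part with values in $[0,1]$, plus a set of measure at most $\|f\|_{L^1}$. But the two steps that carry the proof are not supplied, and as formulated they would fail.

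\textbf{Step 3.} The decomposition $f=g+b$ is never actually defined, since $w'$ and the obstacle are unspecified. The obstacle you suggest, built from where $|\nabla I_1 f|$ exceeds $1$, presupposes control of the very set you want to measure. Your proposed source of $0\le g\le 1$ is also false: $(-\Delta)^{1/2}$ of a $1$-Lipschitz function need not be bounded. For $v(x)=(1-|x|)^+$, the Gagliardo integral at $x=0$ is at least $C\int_{|y|<1}|y|^{-n}\,dy=\infty$.

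\textbf{Step 4.} This is the more serious failure. You treat $b$ as a nonnegative measure on the contact set and try to bound where $|Rb|$ is large. But $Rb$ does not vanish off the support of $b$; already $|R\delta_0(x)|=c_n|x|^{-n}$. So a bound $|\{|Rb|>1\}|\le\|b\|$ is the weak-type problem itself, and superharmonicity of $I_1 b$ gives no control on $\nabla I_1 b$. Even granting such a bound, $\{|Rf|>1\}$ is not contained in $\{|Rg|>1\}\cup\{|Rb|>1\}$. You would need level $\tfrac12$ on each piece, and Chebyshev at level $\tfrac12$ already costs a factor $4$, so the constant $2$ does not come out.

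\textbf{The missing idea.} The bad part's Riesz transform must vanish \emph{identically} off a set of controlled measure. So the bad part should be a signed, mean-zero function whose potential lives in a small set, not a positive measure. For $f\ge0$ (applied separately to $f^\pm$), the paper minimizes $\tfrac12\mathcal E_1(v)-\int(f-\lambda)v$ over $v\ge0$. This is the obstacle problem with zero obstacle and source $f-\lambda$: the level enters as a constant subtracted from the density, not through $\nabla I_1 f$. Set $\eta:=(-\Delta)^{1/2}u-(f-\lambda)\ge0$ and $\mu:=\lambda-\eta$, so that $f=\mu+(-\Delta)^{1/2}u$. Then:
\begin{enumerate}
\item the Lewy--Stampacchia bound $\eta\le(\lambda-f)^+$ gives $0\le\mu\le\lambda$, hence $\|\mu\|_{L^2}^2\le\lambda\|f\|_{L^1}$ (this, not mass control, is what the estimate is for);
\item complementarity $\eta u=0$ gives $\mu=\lambda$ on $\Omega=\{u>0\}$;
\item mass conservation $\|\mu\|_{L^1}=\|f\|_{L^1}$ then gives $\lambda|\Omega|\le\|f\|_{L^1}$.
\end{enumerate}
Finally, $R\big((-\Delta)^{1/2}u\big)=\nabla u$, which vanishes a.e.\ on $\{u=0\}$ by Stampacchia's lemma. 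Hence $Rf=R\mu$ off $\Omega$, and $\{|Rf|>\lambda\}\subseteq\Omega\cup\{|R\mu|>\lambda\}$ with no splitting of $\lambda$. That is exactly where the constant $2$ comes from.
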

\noindent

Before this paper, the best-known dimensional constant in \eqref{wl1} was due to Janakiraman \cite{J2004}, who showed that the bound \eqref{wl1} holds with $C_n=c \log(n)$ for some absolute constant $c>0$. We remark that this $\log(n)$ dependence for the component Riesz transforms only implies the corresponding weak-type bound for the vector Riesz transform with an asymptotic dependence of $\sqrt{n}\log(n)$. On the other hand, Theorem \ref{MainTheorem} gives that the weak-type $(1,1)$ bound for the vector Riesz transform $R$ holds with constant $2$ for any dimension, and therefore \eqref{wl1} is also valid with the same constant. It is also interesting to observe that the constant $2$ appearing in Theorem \ref{MainTheorem} matches the constant in the dimension-free strong-type bounds for $R$ on $L^p(\mathbb{R}^n)$ obtained by Ba\~{n}uelos and Wang \cite[Corollary 4.5]{BW}; see also Dragi\v{c}evi\'c and Volberg \cite[Corollary 0.2]{DV06}. 
We refer to \cite{BO,SS} for further historical remarks on dimensionally-dependent estimates for the Riesz transforms, and to \cite{A2011} for a parallel historical discussion concerning maximal functions.
\looseness=-1

Prior progress towards the Riesz transform dimensional weak-type $(1,1)$ problem relied on adaptations of Calder\'on-Zygmund decompositions which, although are valid for a broader class of singular integrals, did not take into account the particular structure of the Riesz transform. Following these methods, dimensional independence is lost when treating terms involving the ``bad" pieces, as one must integrate the kernel on annular regions surrounding their supports.  The following new decomposition, which is adapted to the Riesz transform's structure as an operator satisfying $R= \nabla(-\Delta)^{-1/2}$, is the crux of the proof of Theorem \ref{MainTheorem}.  Although our application only requires the case when the smoothness parameter $\alpha$ equals $1$, as it may be of interest, we establish the decomposition for general $0<\alpha<2$.  In what follows, $H^{\alpha}(\mathbb{R}^n)$ denotes the Sobolev space consisting of $u \in L^2(\mathbb{R}^n)$ such that $(-\Delta)^{\alpha/2}u \in L^2(\mathbb{R}^n)$.  \looseness=-1

\begin{thm}\label{thm:decomp}
If $f\in L^1(\mathbb{R}^n)\cap L^2(\mathbb{R}^n)$ is nonnegative, $\lambda>0$, and $0<\alpha<2$, then there exist nonnegative functions $\mu \in L^1(\mathbb{R}^n)\cap L^\infty(\mathbb{R}^n)$ and $u \in H^\alpha(\mathbb{R}^n)\cap L^1(\mathbb{R}^n)$ such that 
$$
	f = \mu + (-\Delta)^{\alpha/2}u,
$$
where
\begin{enumerate}
\addtolength{\itemsep}{2pt}
\item $\|\mu\|_{L^{\infty}(\mathbb{R}^n)} \leq \lambda$ and $\|\mu\|_{L^1(\mathbb{R}^n)} = \|f\|_{L^1(\mathbb{R}^n)}$,
\item $\mu=\lambda$ almost everywhere on $\Omega:=\{x\in\mathbb{R}^n\colon u(x)>0\}$,  and
\item $|\Omega|\leq \frac{1}{\lambda} \|f\|_{L^1(\mathbb{R}^n)}$.
\end{enumerate}
Moreover, if $\alpha\ge1$, then $u\in H^1(\mathbb{R}^n)$ and $\nabla u=0$ almost everywhere on $\mathbb{R}^n\setminus\Omega$. 
\end{thm}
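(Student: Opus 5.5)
We will obtain $u$ as the solution of an obstacle problem for the fractional Laplacian and then define $\mu := f-(-\Delta)^{\alpha/2}u$. The natural choice is the least supersolution
\[
u := \inf\{v\in H^{\alpha/2}(\mathbb{R}^n) : v\ge 0,\ (-\Delta)^{\alpha/2}v \ge f-\lambda\}.
\]
Equivalently, $u$ is the minimizer over the closed convex cone $K=\{v\in \dot H^{\alpha/2}: v\ge 0\}$ of
\[
E(v)=\tfrac12\|(-\Delta)^{\alpha/4}v\|_2^2-\int (f-\lambda)v .
\]
Since $\int(f-\lambda)v$ is unbounded on $L^1$-type tails, we first work on balls $B_R$, where the problem is coercive. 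We then pass to the limit $R\to\infty$ using monotonicity in $R$ (comparison principle) together with uniform bounds.

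The variational inequality yields the complementarity conditions $u\ge 0$, $(-\Delta)^{\alpha/2}u\ge f-\lambda$, and $(-\Delta)^{\alpha/2}u=f-\lambda$ on $\{u>0\}$. Hence $\mu=\lambda$ on $\Omega$, which is (2), and $\mu\le\lambda$ everywhere. For $\mu\ge 0$, note that off $\Omega$ we have $u=0$, so $u$ attains its minimum there. The integral representation
\[
(-\Delta)^{\alpha/2}u(x)=c\int\frac{u(x)-u(y)}{|x-y|^{n+\alpha}}\,dy
\]
then gives $(-\Delta)^{\alpha/2}u\le 0$ on $\{u=0\}$, so $\mu\ge f\ge 0$ there. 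On $\Omega$ we have $\mu=\lambda\ge0$ directly.

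The Lewy--Stampacchia estimate $f-\lambda\le(-\Delta)^{\alpha/2}u\le f$ (i.e.\ $0\le\mu$, $\mu\le\lambda$) is the key quantitative tool. It gives $(-\Delta)^{\alpha/2}u\in L^2$ once $f\in L^2$, so that $u\in H^\alpha$. On balls we prove it by the standard penalization $(-\Delta)^{\alpha/2}u_\varepsilon=f-\lambda\,\theta_\varepsilon(u_\varepsilon)$ with $\theta_\varepsilon$ a smoothed Heaviside function, and then pass $\varepsilon\to0$.

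For the mass identity $\|\mu\|_1=\|f\|_1$, equivalently $\int(-\Delta)^{\alpha/2}u=0$, we need $u\in L^1$ and a vanishing Fourier symbol at zero. Integrating $\mu=\lambda$ on $\Omega$ and using $\mu\ge0$ gives
\[
\lambda|\Omega|=\int_\Omega\mu\le\|\mu\|_1=\|f\|_1,
\]
which is (3). To get $u\in L^1$, we test the equation against the truncations $\min(1,u/\delta)$ and against cutoffs. Alternatively, we use that $u\le I_\alpha f$ (the Riesz potential, which is a supersolution), and control $u$ on the finite-measure set $\Omega$. This is only legitimate for $\alpha<n$; otherwise we rely on the $L^2$ bound on $\Omega$ plus $|\Omega|<\infty$, giving $u\in L^1$ since $u$ is supported in $\Omega$. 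Then $\widehat{(-\Delta)^{\alpha/2}u}(\xi)=|\xi|^\alpha\hat u(\xi)\to0$ as $\xi\to0$, and since $(-\Delta)^{\alpha/2}u=f-\mu\in L^1$, its integral is $0$.

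Finally, for $\alpha\ge1$ we have $H^\alpha\subset H^1$, and the fact that $\nabla u=0$ a.e.\ on the level set $\{u=0\}$ is Stampacchia's theorem.

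The main obstacle is the passage to the unbounded domain. We must show the limit of the ball solutions exists in $H^{\alpha}$, that the mass of $(-\Delta)^{\alpha/2}u$ does not escape to infinity, and that the Lewy--Stampacchia bounds survive the limit. To handle this, we first prove the uniform estimate $\lambda|\{u_R>0\}|\le\|f\|_1$ on balls. This bound is independent of $R$ and confines the supports to sets of uniformly bounded measure, which should prevent loss of mass in the limit.
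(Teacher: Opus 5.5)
Your overall architecture matches the paper's: an obstacle problem, complementarity, a Lewy--Stampacchia bound, mass conservation from the vanishing of $(2\pi|\xi|)^\alpha\widehat u$ at $\xi=0$, and Stampacchia's theorem for $\alpha\ge1$. The gap is the step you yourself call the main obstacle: constructing $u$ on $\mathbb{R}^n$ and showing $u\in L^1$.

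\textbf{The ball construction is not carried out.} You build $u$ as a limit of ball solutions $u_R$, but the $R$-independent bound $\lambda|\{u_R>0\}|\le\|f\|_{L^1}$ on which it rests is only asserted. Your route to (3) is not available on $B_R$: the exterior condition absorbs mass, so $\|\mu_R\|_{L^1}=\|f\|_{L^1}$ fails in general. You would also need uniform energy and $L^2$ bounds on $u_R$, and a proof that the monotone limit solves the problem on $\mathbb{R}^n$.

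Without that bound, your chain is circular. You derive (3) from the mass identity, the mass identity needs $u\in L^1$, and you derive $u\in L^1$ from $|\Omega|<\infty$.

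\textbf{The detour is based on a misdiagnosis.} On the cone $v\ge0$ one has $-\int(f-\lambda)v=\lambda\|v\|_{L^1}-\int fv$, so the tails contribute a \emph{coercive} term, not a harmful one. Bound $\int fv\le\|f\|_{L^2}\|v\|_{L^2}$ and use the interpolation inequality $\|v\|_{L^2}^2\le K_{n,\alpha}\|v\|_{L^1}^{2\alpha/(n+\alpha)}\mathcal E_\alpha(v)^{n/(n+\alpha)}$ (split $\widehat v$ at $|\xi|=r$). Young's inequality then gives $J_\lambda^\alpha(v)\ge\frac14\mathcal E_\alpha(v)+\frac\lambda2\|v\|_{L^1}-c_0$ directly on $\mathbb{R}^n$.

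That is the missing idea. The paper minimizes over $X_\alpha=\{v\in L^1:\mathcal E_\alpha(v)<\infty\}$, so $u\in L^1$ is built in. No balls, comparison principle, or penalization are needed. The bound $0\le\eta\le(\lambda-f)^+$ comes simply from testing the variational inequality with $(u-\varepsilon\varphi)^+$.

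\textbf{Smaller points.}
\begin{itemize}
\item Your claims that Lewy--Stampacchia gives $(-\Delta)^{\alpha/2}u\in L^2$, and that $f-\mu\in L^1$, both presuppose $\mu\in L^1$. Boundedness of $\mu$ is not enough on $\mathbb{R}^n$. The correct order is as follows. Given $u\in L^1$, test with dilated cutoffs $\chi_\rho$, for which $|\mathcal E_\alpha(u,\chi_\rho)|\le\rho^{-\alpha}\|u\|_{L^1}\int(2\pi|\zeta|)^\alpha|\widehat\chi(\zeta)|\,d\zeta\to0$. Monotone convergence then gives $\int\mu=\int f$, hence $\mu\in L^1\cap L^\infty\subseteq L^2$, and only then $u\in H^\alpha$.
\item The cone $\{v\in\dot H^{\alpha/2}:v\ge0\}$ is ill-defined for $\alpha\ge n$, i.e.\ $n=1$, $1\le\alpha<2$, because $\dot H^{\alpha/2}$ is then defined only modulo constants. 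This range includes the case needed for Theorem~\ref{MainTheorem} on $\mathbb{R}$; the space $X_\alpha$ avoids the problem.
\item The truncation idea you mention does give (3) directly, on balls or on $\mathbb{R}^n$, without the mass identity. Let $\psi_\delta=\min(1,u/\delta)$. Both $u\pm t\psi_\delta$ are admissible for small $t>0$, so the variational inequality gives $\mathcal E_\alpha(u,\psi_\delta)=\int(f-\lambda)\psi_\delta$. The Gagliardo form gives $\mathcal E_\alpha(u,\psi_\delta)\ge0$, since $\psi_\delta$ is a nondecreasing function of $u$. Letting $\delta\to0$ yields $\lambda|\Omega|\le\int_\Omega f$.
\end{itemize}
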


Theorem \ref{thm:decomp} is referred to in certain literature as partial balayage \cite{GS94,GR18} and in other literature as a divisible sandpile and odometer \cite{LP09,LP10}, though we find it useful to adopt the language of variational inequalities and obstacle problems \cite{KS80,SV13}.
The case $\alpha=2$ is classical; see \cite{KS80} for an extensive development.  
In the fractional case, a bounded domain analogue of our Theorem \ref{thm:decomp} has been established in the work of Servadei and Valdinoci \cite{SV13}.  As the reader can verify, in addition to the minor technical differences we implement to handle the unbounded case, our emphasis differs from the presentation in \cite{SV13}.  This shift in perspective, along with the many small technical modifications, motivates the complete presentation of details we give in the sequel.

We now prove Theorem \ref{MainTheorem}, assuming Theorem \ref{thm:decomp}.
\begin{proof}[Proof of Theorem \ref{MainTheorem}]
By density, we may assume that $f \in L^1(\mathbb{R}^n)\cap L^2(\mathbb{R}^n)$.
Write $f=f^+-f^-$ and let $\lambda>0$.   An application of Theorem~\ref{thm:decomp} with $\alpha=1$ to $f^+$ and $f^-$
at level $\lambda$ yields nonnegative functions $\mu_\pm$ and  $u_\pm$ such that 
$$
	f^\pm=\mu_\pm+(-\Delta)^{1/2} u_\pm,
$$
$\|\mu_\pm\|_{L^{\infty}(\mathbb{R}^n)} \leq \lambda$, $\|\mu_\pm\|_{L^1(\mathbb{R}^n)}=\|f^\pm\|_{L^1(\mathbb{R}^n)}$, $\mu_\pm=\lambda$ almost everywhere on $\Omega_\pm:=\{u_\pm>0\}$,
$\lambda|\Omega_\pm|\le \|f^\pm\|_{L^1(\mathbb{R}^n)}$, and $\nabla u_\pm=0$ almost everywhere on $\mathbb{R}^n\setminus\Omega_\pm$. Define $\mu:=\mu_+-\mu_-$, $u:=u_+-u_-$, and $\Omega:=\Omega_+\cup\Omega_-$, so that
\begin{align*}
f&=\mu+(-\Delta)^{1/2} u.
\end{align*}
As this is an identity of $L^2(\mathbb{R}^n)$ functions, this implies
\begin{align*}
Rf&=R\mu+\nabla u.
\end{align*}
Both $\nabla u_+$ and $\nabla u_-$ vanish almost everywhere away from $\Omega$, so
$\nabla u=0$ almost everywhere on $\mathbb{R}^n\setminus\Omega$ and therefore, up to a null set, we have
\[
\{|R f|>\lambda\}\subseteq\Omega\cup\{|R\mu|>\lambda\}.\]
By subadditivity, 
\[
\lambda|\Omega|\;\le\;\lambda|\Omega_+|+\lambda|\Omega_-|\;\le\; \|f^+\|_{L^1(\mathbb{R}^n)} + \|f^-\|_{L^1(\mathbb{R}^n)}\;=\;\|f\|_{L^1(\mathbb{R}^n)}.
\]
Meanwhile, as $0\le\mu_\pm\le\lambda$ we have $|\mu|\le\lambda$ pointwise, and so
\[
\|\mu\|_{L^2(\mathbb{R}^n)}^2\leq \lambda\int_{\mathbb{R}^n}|\mu|\,dx
=\lambda\int_{\mathbb{R}^n}(\mu_++\mu_-)\,dx=\lambda\,(\|f^+\|_{L^1(\mathbb{R}^n)}+\|f^-\|_{L^1(\mathbb{R}^n)})=\lambda \|f\|_{L^1(\mathbb{R}^n)}.
\]
By Chebyshev's inequality and the boundedness of the Riesz transform on $L^2(\mathbb{R}^n)$, we have 
\[
\lambda|\{|R\mu|>\lambda\}|\le\lambda^{-1}\|R\mu\|_{L^2(\mathbb{R}^n)}^2=\lambda^{-1}\|\mu\|_{L^2(\mathbb{R}^n)}^2\le \|f\|_{L^1(\mathbb{R}^n)}.\]
The result follows upon taking a supremum over all $\lambda>0$. 
\end{proof}

The body of this paper is focused on establishing Theorem \ref{thm:decomp}.  We give a brief outline here, focusing on the case $\alpha=1$ for the ease of presentation.  For fixed $f\in L^1(\mathbb{R}^n)\cap L^2(\mathbb{R}^n)$, $\lambda>0$, and $v \in H^{1/2}(\mathbb{R}^n)\cap L^1(\mathbb{R}^n)$, we define the energy functional
\begin{align*}
J_\lambda(v)&:=\frac12 \frac{\Gamma\!\big(\tfrac{n+1}2\big)}{\pi^{(n+1)/2}} \iint_{\mathbb{R}^{2n}}\frac{|v(x)-v(y)|^2}{|x-y|^{n+1}}\,dxdy -\int_{\mathbb{R}^n}(f-\lambda)v\,dx. 
\end{align*}
Following the direct method of Tonelli, a combination of compactness and a lower-semicontinuity argument yields that $J_\lambda$ admits a unique minimizer over the cone of nonnegative functions
\[
\mathcal{K}:=\{v\in H^{1/2}(\mathbb{R}^n)\cap L^1(\mathbb{R}^n)\colon  v\ge0\ \text{almost everywhere on }\mathbb{R}^n\}.
\]
Letting $u$ denote this minimizer and writing  
$$
\mathcal{E}(u,v):= \frac12 \frac{\Gamma\!\big(\tfrac{n+1}2\big)}{\pi^{(n+1)/2}} \iint_{\mathbb{R}^{2n}}\frac{(u(x)-u(y))(v(x)-v(y))}{|x-y|^{n+1}}\,dxdy
$$
for a suitably normalized inner product on $H^{1/2}(\mathbb{R}^n)$, a standard approach in the theory of variational inequalities yields that $u$ satisfies
$$
\mathcal{E}(u,v-u) \ge  \int_{\mathbb{R}^n}(f-\lambda)(v-u)\,dx
$$
for all $v\in\mathcal K$, along with the energy identity
$$
\mathcal E(u,u)=\int_{\mathbb{R}^n}(f-\lambda)u\,dx. 
$$
This implies that the distribution
$$
\eta := (-\Delta)^{1/2}u - (f-\lambda)
$$
is a positive Radon measure.  With the choice of a suitable test function, one finds that this measure is absolutely continuous with respect to the Lebesgue measure, and moreover, enjoys the Lewy--Stampacchia type estimate
$$
0\le\eta\le(\lambda-f)^+\le\lambda
$$
almost everywhere.  Defining $\mu:=\lambda-\eta$, one obtains the claimed decomposition
$$
f = \mu + (-\Delta)^{1/2}u.
$$

The paper is organized as follows. In Section \ref{preliminaries}, we recall some relevant results from the theory of Sobolev spaces.  In Section \ref{fractional}, we recall several results concerning the Gagliardo semi-norms and fractional Sobolev spaces.  In Section \ref{existence_Veq}, we introduce the energies of interest to us in this paper.  For these energies, we establish the existence and uniqueness of minimizers over the suitable positive cones and derive the variational inequalities and energy identities satisfied by the minimizers.  In Section \ref{sec:cap}, we derive the Lewy--Stampacchia type estimates and establish further properties of the functions $\mu$ and $u$.  Finally, in Section \ref{decomposition} we put together the various pieces developed in the preceding sections to prove Theorem \ref{thm:decomp}.  


\section{Preliminaries}\label{preliminaries}

We record several results that will be useful in the sequel.  We first prove the following lemma, which asserts the validity of the chain rule in $W^{1,1}_{\text{loc}}(\mathbb{R}^n)$ when one composes with certain $C^1(\mathbb{R})$ functions. Here, $W^{1,1}_{\text{loc}}(\mathbb{R}^n)$ denotes the locally-integrable functions $w$ such that $\nabla w \in L^1_{\text{loc}}(\mathbb{R}^n)$.
\begin{lemma}\label{lem:chain}
Let $\Phi\in C^1(\mathbb R)$ with $\Phi(0)=0$ and $\Phi' \in L^{\infty}(\mathbb{R})$. If $w\in
W^{1,1}_{\mathrm{loc}}(\mathbb{R}^n)$, then $\Phi\circ w\in W^{1,1}_{\mathrm{loc}}(\mathbb{R}^n)$ and
$\nabla(\Phi\circ w)=\Phi'(w)\nabla w$ almost everywhere. 
\end{lemma}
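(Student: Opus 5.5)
The plan is the standard mollification argument, localized to an arbitrary ball. Fix a ball $B\subset\subset\mathbb{R}^n$ and a slightly larger ball $B'$ with $B\subset\subset B'$. Let $\rho_\varepsilon$ be a standard mollifier and set $w_\varepsilon:=\rho_\varepsilon*w$. For $\varepsilon$ small, $w_\varepsilon\in C^\infty(\overline{B})$, and on $B$ we have $w_\varepsilon\to w$ and $\nabla w_\varepsilon=\rho_\varepsilon*\nabla w\to\nabla w$ in $L^1(B)$. Passing to a subsequence, we may also assume $w_\varepsilon\to w$ and $\nabla w_\varepsilon\to\nabla w$ almost everywhere on $B$. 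Since $\Phi\in C^1$, the classical chain rule gives
\[
\nabla(\Phi\circ w_\varepsilon)=\Phi'(w_\varepsilon)\nabla w_\varepsilon
\]
pointwise on $B$. Consequently, for every $\varphi\in C_c^\infty(B)$,
\[
\int_B \Phi(w_\varepsilon)\,\partial_i\varphi\,dx=-\int_B \Phi'(w_\varepsilon)\,\partial_i w_\varepsilon\,\varphi\,dx .
\]

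Next I pass to the limit on both sides. On the left, the hypothesis $\Phi(0)=0$ together with $\Phi'\in L^\infty$ gives $|\Phi(s)|\le\|\Phi'\|_\infty|s|$ and $|\Phi(s)-\Phi(t)|\le\|\Phi'\|_\infty|s-t|$. Hence $\Phi\circ w\in L^1_{\mathrm{loc}}$ and $\Phi(w_\varepsilon)\to\Phi(w)$ in $L^1(B)$. On the right, I split
\[
\Phi'(w_\varepsilon)\nabla w_\varepsilon-\Phi'(w)\nabla w=\Phi'(w_\varepsilon)(\nabla w_\varepsilon-\nabla w)+\bigl(\Phi'(w_\varepsilon)-\Phi'(w)\bigr)\nabla w .
\]
The first term is bounded in $L^1(B)$ by $\|\Phi'\|_\infty\|\nabla w_\varepsilon-\nabla w\|_{L^1(B)}\to0$. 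For the second term, the integrand tends to zero almost everywhere because $\Phi'$ is continuous and $w_\varepsilon\to w$ almost everywhere. It is also dominated by $2\|\Phi'\|_\infty|\nabla w|\in L^1(B)$, so dominated convergence sends it to zero in $L^1(B)$.

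Putting these together yields
\[
\int_B\Phi(w)\,\partial_i\varphi\,dx=-\int_B\Phi'(w)\,\partial_iw\,\varphi\,dx .
\]
So the weak gradient of $\Phi\circ w$ on $B$ is $\Phi'(w)\nabla w$, which lies in $L^1(B)$ since $|\Phi'(w)\nabla w|\le\|\Phi'\|_\infty|\nabla w|$. Because $B$ is arbitrary, this proves the lemma.

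The only delicate point is the second term on the right, where continuity of $\Phi'$ must be combined with almost-everywhere convergence, obtained by extracting a subsequence. That combination is exactly what makes dominated convergence available there. Everything else is routine once the Lipschitz bound on $\Phi$ is in hand.
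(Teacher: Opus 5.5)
Your proposal is correct and follows essentially the same route as the paper's proof. Both mollify on an arbitrary ball, extract a subsequence with $w_k\to w$ almost everywhere, and apply the classical chain rule. Both then pass to the limit using the Lipschitz bound $|\Phi(s)-\Phi(t)|\le\|\Phi'\|_{L^\infty(\mathbb{R})}|s-t|$, together with the same splitting of $\Phi'(w_k)\nabla w_k-\Phi'(w)\nabla w$ and dominated convergence; the auxiliary ball $B'$ is unnecessary since $w$ is defined on all of $\mathbb{R}^n$, but it is harmless.
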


\begin{proof}
Fix a ball $B\subseteq \mathbb{R}^n$ and denote by $\rho_{1/k}$ a sequence of standard smooth mollifiers. Define $w_k:=w\ast\rho_{1/k}$ so that $w_k\to w$ and $\nabla w_k\to\nabla w$ in $L^1(B)$. After passing to a subsequence, we may assume that $w_k\to w$ almost everywhere on $B$. Since $\Phi \in C^1(\mathbb{R})$ and $w_k$ is smooth, the classical chain rule gives $\nabla\Phi(w_k)=\Phi'(w_k)\nabla w_k$ on $B$. Now $|\Phi(w_k)-\Phi(w)|\le\|\Phi'\|_{L^{\infty}(\mathbb{R})} |w_k-w|\to0$ in $L^1(B)$, and 
\[
\|\Phi'(w_k)\nabla w_k-\Phi'(w)\nabla w\|_{L^1(B)}
\le\|\Phi'\|_{L^{\infty}(\mathbb{R})}\|\nabla w_k-\nabla w\|_{L^1(B)}+\int_B|\Phi'(w_k)-\Phi'(w)||\nabla w| \,dx,
\]
where the last term tends to $0$ by dominated convergence as $\Phi'$ is continuous and bounded, $w_k\to w$ almost everywhere, and $\nabla w\in L^1(B)$. Passing to the limit in the definition of the weak derivative gives the claim on $B$.  The general result follows as $B$ was arbitrary.
\end{proof}

The next lemma is due to Stampacchia and plays an important role in our analysis.
\begin{lemma}\label{lem:stamp}
If $w\in W^{1,1}_{\mathrm{loc}}(\mathbb{R}^n)$ and $c\in\mathbb R$, then $(w-c)^\pm\in
W^{1,1}_{\mathrm{loc}}(\mathbb{R}^n)$ with
\[
\nabla (w-c)^+=\mathbf{1}_{\{w>c\}}\nabla w \qquad \text{and}\qquad
\nabla (w-c)^-=-\mathbf{1}_{\{w<c\}}\nabla w.
\]
Consequently, $\nabla w=0$ almost everywhere on $\{x\in\mathbb{R}^n\colon w(x)=c\}$.
\end{lemma}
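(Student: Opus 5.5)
The plan is to reduce to the case $c=0$ by replacing $w$ with $w-c$, which still lies in $W^{1,1}_{\mathrm{loc}}(\mathbb{R}^n)$ with the same gradient. We then approximate the nonsmooth map $t\mapsto t^+$ by $C^1$ functions to which Lemma \ref{lem:chain} applies. For $\varepsilon>0$, define
\[
\Phi_\varepsilon(t):=\begin{cases}\sqrt{t^2+\varepsilon^2}-\varepsilon, & t>0,\\ 0, & t\le 0.\end{cases}
\]
Then $\Phi_\varepsilon\in C^1(\mathbb R)$, $\Phi_\varepsilon(0)=0$, $\Phi_\varepsilon'(t)=\frac{t}{\sqrt{t^2+\varepsilon^2}}\mathbf 1_{\{t>0\}}$, and $0\le\Phi_\varepsilon'\le1$. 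By Lemma \ref{lem:chain}, $\Phi_\varepsilon(w)\in W^{1,1}_{\mathrm{loc}}(\mathbb{R}^n)$ with $\nabla\Phi_\varepsilon(w)=\Phi_\varepsilon'(w)\nabla w$ almost everywhere.

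Next we let $\varepsilon\to0$. Pointwise we have $|\Phi_\varepsilon(t)-t^+|\le\varepsilon$, so $\Phi_\varepsilon(w)\to w^+$ uniformly, and hence in $L^1(B)$ for every ball $B$. Also $\Phi_\varepsilon'(w)\to\mathbf 1_{\{w>0\}}$ pointwise, and the products are dominated by $|\nabla w|\in L^1(B)$. By dominated convergence, $\Phi_\varepsilon'(w)\nabla w\to\mathbf 1_{\{w>0\}}\nabla w$ in $L^1(B)$. Passing to the limit in
\[
\int \Phi_\varepsilon(w)\,\partial_i\varphi\,dx=-\int\Phi'_\varepsilon(w)\,\partial_iw\,\varphi\,dx
\]
for $\varphi\in C^\infty_c(B)$ gives $w^+\in W^{1,1}_{\mathrm{loc}}(\mathbb{R}^n)$ with $\nabla w^+=\mathbf 1_{\{w>0\}}\nabla w$. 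For the negative part, write $w^-=(-w)^+$ and apply the same result to $-w$, which gives $\nabla w^-=-\mathbf 1_{\{w<0\}}\nabla w$.

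For the consequence, use $w=w^+-w^-$ to get
\[
\nabla w=\nabla w^+-\nabla w^-=\mathbf 1_{\{w>0\}}\nabla w+\mathbf 1_{\{w<0\}}\nabla w=\mathbf 1_{\{w\neq0\}}\nabla w
\]
almost everywhere, so $\nabla w=0$ almost everywhere on $\{w=0\}$. Undoing the shift gives the same statement on $\{w=c\}$.

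The only delicate point is choosing a regularization that is genuinely $C^1$ with a bounded derivative, so that Lemma \ref{lem:chain} applies, and whose derivative converges to exactly $\mathbf 1_{\{t>0\}}$, including at $t=0$. The function $\Phi_\varepsilon$ above satisfies both: its derivative is continuous at $0$, and $\Phi_\varepsilon'(0)=0$ matches $\mathbf 1_{\{0>0\}}=0$. Everything else is dominated convergence.
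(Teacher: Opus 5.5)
Your proof is correct and follows essentially the same route as the paper: a $C^1$ regularization with bounded derivative, Lemma~\ref{lem:chain}, and dominated convergence in the weak-derivative identity, with the same final step for $\{w=c\}$. The only cosmetic difference is that the paper regularizes $|s-c|$ via $\sqrt{(s-c)^2+\delta^2}-\sqrt{c^2+\delta^2}$ and recovers $(w-c)^\pm$ from $\tfrac12\big(|w-c|\pm(w-c)\big)$, whereas you shift to $c=0$ and regularize $t^+$ directly.
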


\begin{proof}
Set $\Phi_\delta(s):=\sqrt{(s-c)^2+\delta^2}-\sqrt{c^2+\delta^2}$ for $\delta>0$. Then
$\Phi_\delta\in C^1(\mathbb{R})$, $\Phi_\delta(0)=0$, and $\Phi_\delta'\in L^{\infty}(\mathbb{R})$, so Lemma~\ref{lem:chain} gives
\[
\nabla \Phi_\delta(w)=\frac{w-c}{\sqrt{(w-c)^2+\delta^2}}\,\nabla w .
\]
We have $\Phi_\delta(w)\to|w-c|-|c|$ pointwise as $\delta\rightarrow 0$ with
$|\Phi_\delta(w)|\le|w-c|+|c|$, hence also in $L^1_{\mathrm{loc}}(\mathbb{R}^n)$ by dominated convergence. Further, 
\[
\frac{w-c}{\sqrt{(w-c)^2+\delta^2}}\,\nabla w\;\longrightarrow\;\operatorname{sgn}(w-c)\,\nabla w
\]
in $L^1_{\mathrm{loc}}(\mathbb{R}^n)$, again by dominated convergence, where $\operatorname{sgn}(0):=0$. 
Passing to the limit in the definition of the weak derivative, we have that $|w-c|\in W^{1,1}_{\mathrm{loc}}$ with $\nabla|w-c|=\operatorname{sgn}(w-c)\nabla w$.

Since $(w-c)^+=\tfrac12\big(|w-c|+(w-c)\big)$, we obtain $\nabla(w-c)^+=\tfrac12\big(\operatorname{sgn}(w-c)+1\big)\nabla w=\mathbf{1}_{\{w>c\}}\nabla w$, and similarly for $(w-c)^-$. Finally, $w-c=(w-c)^+-(w-c)^-$ gives $\nabla w=\mathbf{1}_{\{w>c\}}\nabla w+\mathbf{1}_{\{w<c\}}\nabla w=\mathbf{1}_{\{w\ne c\}}\nabla w$, i.e., $\mathbf{1}_{\{w=c\}}\nabla w=0$ almost everywhere.
\end{proof}


\section{Fractional Sobolev Space Preliminaries}\label{fractional}
We use the convention
$$
	\widehat f(\xi)=\int_{\mathbb{R}^n}f(x)e^{-2\pi i x\cdot\xi}\,dx,
$$
for the Fourier transform, and, for $\alpha>0$, write $(-\Delta)^{\alpha/2}$ to denote the fractional Laplacian of order $\alpha/2$, i.e., the Fourier multiplier with symbol $(2\pi|\xi|)^{\alpha}$. We further define the quadratic form or inner product associated with the fractional Laplacian by
$$
	\mathcal E_\alpha(v,w)=\int_{\mathbb{R}^n}(2\pi|\xi|)^\alpha\widehat v(\xi)\,\overline{\widehat w(\xi)}\,d\xi.
$$
We use the following shorthand notation for the square of the Sobolev semi-norm associated with the inner product $\mathcal E_\alpha$: 
$$
\mathcal E_\alpha(v):=\mathcal E_\alpha(v,v).
$$
This semi-norm admits an associated energy space that will be of interest to us
$$
X_\alpha:=\{v\in L^1(\mathbb{R}^n)\colon  \mathcal E_\alpha(v)<\infty\}.
$$
We will use the following Gagliardo representation of $\mathcal E_\alpha$, see, e.g. \cite[Section 3]{DNPV12}.
\begin{lemma}\label{lem:gag}
If $0<\alpha<2$, then 
\[
\mathcal E_\alpha(v,w)=\frac{C(n,\alpha)}{2}\iint_{\mathbb{R}^{2n}} \frac{\big(v(x)-v(y)\big)\big(w(x)-w(y)\big)}{|x-y|^{\,n+\alpha}}\,dxdy
\]
 for all $v,w\in X_\alpha$, where $$C(n,\alpha):=\frac{\alpha\,2^{\alpha-1}\,\Gamma\!\big(\tfrac{n+\alpha}2\big)}{\pi^{n/2}\,\Gamma\!\big(1-\tfrac\alpha2\big)}$$ and the double integral converges absolutely. Equivalently, 
$$
(-\Delta)^{\alpha/2}v(x)=C(n,\alpha)\,p.v.\int_{\mathbb{R}^n}\frac{v(x)-v(y)}{|x-y|^{n+\alpha}}\,dy
$$
for all $v \in X_{\alpha}$. 
\end{lemma}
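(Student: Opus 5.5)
The plan is to prove the identity first on a dense class by a Fourier-side computation, and then extend it to all of $X_\alpha$ by a limiting argument.

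\textbf{Step 1: the kernel computation.} Fix $v\in C^\infty_c(\mathbb{R}^n)$. By Plancherel applied in $x$ for each fixed $h$,
\[
\int_{\mathbb{R}^n}|v(x+h)-v(x)|^2\,dx=\int_{\mathbb{R}^n}|e^{2\pi i h\cdot\xi}-1|^2|\widehat v(\xi)|^2\,d\xi .
\]
Substitute $y=x+h$, integrate in $h$ against $|h|^{-n-\alpha}$, and use Tonelli, since everything is nonnegative. This gives
\[
\iint\frac{|v(x)-v(y)|^2}{|x-y|^{n+\alpha}}\,dxdy=\int |\widehat v(\xi)|^2\,I(\xi)\,d\xi,
\qquad I(\xi):=\int_{\mathbb{R}^n}\frac{2-2\cos(2\pi h\cdot\xi)}{|h|^{n+\alpha}}\,dh .
\]
The integral $I(\xi)$ converges precisely because $0<\alpha<2$: the integrand is $O(|h|^{2-n-\alpha})$ near $0$ and $O(|h|^{-n-\alpha})$ at infinity. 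Rotation invariance and scaling give $I(\xi)=I(e_1)\,|\xi|^{\alpha}$.

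\textbf{Step 2: the constant.} The only computational step is identifying $I(e_1)=2(2\pi)^\alpha/C(n,\alpha)$. I would not derive this from scratch; I would cite \cite[Section 3]{DNPV12}, adjusting for our Fourier normalization. Alternatively, one can reduce to one dimension by integrating out the orthogonal variables and then use $\int_0^\infty(1-\cos t)\,t^{-1-\alpha}\,dt=\Gamma(-\alpha)\cos(\pi\alpha/2)$ (suitably interpreted), together with the reflection formula. This is where the bookkeeping is most error-prone, and it is the main obstacle only in that sense.

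\textbf{Step 3: polarization and the dense class.} Polarization gives the bilinear identity for $v,w\in C^\infty_c$, and absolute convergence follows from Cauchy--Schwarz applied to the double integral. To pass to general $v\in X_\alpha$, use a density argument. Note first that $v\in L^1$ gives $\widehat v\in L^\infty$. Here is the point needing care. Apply the Plancherel identity of Step 1 with $v\in L^1$ but possibly not in $L^2$. For each $h$, the difference $v(\cdot+h)-v$ lies in $L^1$ and has Fourier transform $(e^{2\pi i h\cdot\xi}-1)\widehat v$. Plancherel in the extended form states that an $L^1$ function whose Fourier transform lies in $L^2$ is itself in $L^2$ with equal norm; this can be checked by mollifying and using Fatou. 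With that in hand, the Tonelli computation of Step 1 goes through verbatim for every $v\in X_\alpha$, with both sides possibly infinite but equal. Hence finiteness of $\mathcal E_\alpha(v)$ is equivalent to finiteness of the Gagliardo energy, and the identity holds. Polarization again, together with Cauchy--Schwarz for absolute convergence, gives the bilinear statement for $v,w\in X_\alpha$.

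\textbf{Step 4: the pointwise formula.} The principal-value representation is the distributional reformulation of the bilinear identity. Pair with $w\in C^\infty_c$ and symmetrize the double integral. Truncating to $|x-y|>\varepsilon$ gives
\[
\tfrac{C}{2}\iint_{|x-y|>\varepsilon}\frac{(v(x)-v(y))(w(x)-w(y))}{|x-y|^{n+\alpha}}\,dxdy
= C\int w(x)\int_{|x-y|>\varepsilon}\frac{v(x)-v(y)}{|x-y|^{n+\alpha}}\,dy\,dx .
\]
Letting $\varepsilon\to0$ by dominated convergence on the left identifies $(-\Delta)^{\alpha/2}v$ with the principal value, in the sense of distributions. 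Pointwise, this holds wherever the principal value exists, for instance for smooth $v$, which is the standard reading of this claim.
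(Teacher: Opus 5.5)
Your argument is correct and is essentially the standard proof from \cite[Section 3]{DNPV12}, which is all the paper offers for this lemma: Plancherel in $x$ at fixed $h$, Tonelli, rotation and scaling of $I(\xi)$, then polarization. Your extended-Plancherel observation is exactly what justifies applying that argument to $v\in X_\alpha$, since such $v$ are only a priori in $L^1$. There are two small slips. First, in the optional one-dimensional route the correct value is $\int_0^\infty(1-\cos t)\,t^{-1-\alpha}\,dt=-\Gamma(-\alpha)\cos(\pi\alpha/2)=\Gamma(1-\alpha)\cos(\pi\alpha/2)/\alpha$; your expression $\Gamma(-\alpha)\cos(\pi\alpha/2)$ is negative on $(0,2)$. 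Second, the pointwise claim ``wherever the principal value exists'' needs some local domination (e.g.\ $L^1_{\mathrm{loc}}$ control) of the truncated integrals to identify their limit with the distribution $(-\Delta)^{\alpha/2}v$. It is fine for smooth $v$, and the paper never uses the pointwise form.
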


We next record an important property on the sign of the inner product of nonnegative functions with disjoint supports.
\begin{lemma}\label{lem:markov}
If $0<\alpha<2$ and $v,w\in X_\alpha$ are nonnegative functions with $vw=0$ almost everywhere, then $\mathcal E_\alpha(v,w)\le0$.
\end{lemma}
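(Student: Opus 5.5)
We will use the Gagliardo representation of Lemma \ref{lem:gag}, which applies since $v,w\in X_\alpha$ and guarantees that the double integral converges absolutely. It gives
\[
\mathcal E_\alpha(v,w)=\frac{C(n,\alpha)}{2}\iint_{\mathbb{R}^{2n}} \frac{\big(v(x)-v(y)\big)\big(w(x)-w(y)\big)}{|x-y|^{\,n+\alpha}}\,dxdy.
\]
Because the integral converges absolutely, we may expand the product in the numerator pointwise:
\[
\big(v(x)-v(y)\big)\big(w(x)-w(y)\big)=v(x)w(x)+v(y)w(y)-v(x)w(y)-v(y)w(x).
\]
We will not split the integral into four separate integrals, since those pieces need not converge individually. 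Instead we argue that the integrand itself has a sign almost everywhere on $\mathbb{R}^{2n}$.

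Let $N:=\{z\in\mathbb{R}^n\colon v(z)w(z)\neq0\}$, which is a null set by hypothesis. Then $N\times\mathbb{R}^n$ and $\mathbb{R}^n\times N$ are null in $\mathbb{R}^{2n}$. For $(x,y)$ outside these sets we have $v(x)w(x)=v(y)w(y)=0$. The product in the numerator therefore equals $-\big(v(x)w(y)+v(y)w(x)\big)$, which is $\le0$ because $v,w\ge0$. The diagonal $\{x=y\}$ is also null, so the kernel $|x-y|^{-n-\alpha}$ is finite and positive almost everywhere. Hence the integrand is nonpositive almost everywhere.

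Since $C(n,\alpha)>0$ for $0<\alpha<2$ (as $\Gamma(1-\alpha/2)>0$), the integral of an absolutely integrable, almost everywhere nonpositive function is $\le0$. This gives $\mathcal E_\alpha(v,w)\le0$.

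The only delicate point is using absolute convergence to justify working with the integrand pointwise. Lemma \ref{lem:gag} supplies this, so no real obstacle arises.
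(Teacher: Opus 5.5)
Your proposal is correct and follows the paper's proof. Both apply the Gagliardo representation of Lemma~\ref{lem:gag} and observe that, since $vw=0$ almost everywhere, the numerator reduces to $-v(x)w(y)-v(y)w(x)\le 0$ for almost every $(x,y)$; your handling of the null sets $N\times\mathbb{R}^n$ and $\mathbb{R}^n\times N$, and of the positivity of $C(n,\alpha)$, simply spells out what the paper leaves implicit.
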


\begin{proof}
By the hypotheses,  we have that $(v(x)-v(y))(w(x)-w(y))=-v(x)w(y)-v(y)w(x)\le0$. Therefore, Lemma~\ref{lem:gag} gives 
$$
	\mathcal E_\alpha(v,w)=-\frac{C(n,\alpha)}2\iint_{\mathbb{R}^{2n}} \frac{v(x)w(y)+v(y)w(x)}{|x-y|^{n+\alpha}}\,dxdy \leq 0,
$$
as desired.
\end{proof}

The following lemma asserts that the space $X_\alpha$ is closed under composition with $1$-Lipschitz functions and that such compositions decrease the energy $\mathcal E_\alpha$. As a consequence, we have that $v^\pm\in X_\alpha$ with $\mathcal{E}_{\alpha}(v^{\pm}) \leq \mathcal{E}_{\alpha}(v)$ whenever $v \in X_{\alpha}$. 
\begin{lemma}\label{lem:contr}
If $\Phi\colon \mathbb R\to\mathbb R$ is a $1$-Lipschitz function which satisfies $\Phi(0)=0$, $0<\alpha<2$, and $v\in X_\alpha$, then $\Phi\circ v\in X_\alpha$ with $\mathcal E_\alpha(\Phi\circ v)\le\mathcal E_\alpha(v)$. 
\end{lemma}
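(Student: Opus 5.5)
The plan is to go through the Gagliardo representation of Lemma~\ref{lem:gag}, which turns the claim into a pointwise comparison of integrands. Let $v\in X_\alpha$ and set $w:=\Phi\circ v$. First we check that $w\in L^1(\mathbb{R}^n)$. Since $\Phi(0)=0$ and $\Phi$ is $1$-Lipschitz, we have $|w(x)|=|\Phi(v(x))-\Phi(0)|\le|v(x)|$, so $\|w\|_{L^1(\mathbb{R}^n)}\le\|v\|_{L^1(\mathbb{R}^n)}<\infty$. In particular $w\in L^1\cap L^\infty_{\mathrm{loc}}$-type integrability is not needed; we only need $w\in L^1$ and a finite energy.

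Next we compare energies. The Lipschitz bound gives, for all $x,y$,
\[
|w(x)-w(y)|^2\le|v(x)-v(y)|^2 .
\]
Integrating against the positive kernel $|x-y|^{-n-\alpha}$ and using that the double integral for $v$ converges by Lemma~\ref{lem:gag}, we get
\[
\frac{C(n,\alpha)}{2}\iint_{\mathbb{R}^{2n}}\frac{|w(x)-w(y)|^2}{|x-y|^{n+\alpha}}\,dxdy\le\frac{C(n,\alpha)}{2}\iint_{\mathbb{R}^{2n}}\frac{|v(x)-v(y)|^2}{|x-y|^{n+\alpha}}\,dxdy=\mathcal E_\alpha(v)<\infty .
\]

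The main obstacle is that Lemma~\ref{lem:gag} is stated only for functions already in $X_\alpha$. So we cannot yet identify the left-hand side with the Fourier quantity $\mathcal E_\alpha(w)$, since finiteness of $\mathcal E_\alpha(w)$ is exactly what we want to prove. To close this gap we use the Plancherel-type identity behind Lemma~\ref{lem:gag}, applied to $w\in L^1$ without a priori finiteness. For $w\in L^1$, $\widehat w$ is a bounded continuous function. Writing $\tau_h w(x)=w(x+h)$, Fubini in $x$ and Plancherel (or Fatou if $w\notin L^2$) give
\[
\int_{\mathbb{R}^n}|w(x+h)-w(x)|^2\,dx=\int_{\mathbb{R}^n}|e^{2\pi i h\cdot\xi}-1|^2|\widehat w(\xi)|^2\,d\xi .
\]
Both sides are possibly infinite, and the identity holds as an equality in $[0,\infty]$.

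To handle the possibility $w\notin L^2$, note first that $v\in L^2$. Indeed $\widehat v$ is bounded and $\int|\xi|^\alpha|\widehat v|^2<\infty$, so $\int|\widehat v|^2<\infty$, splitting at $|\xi|=1$. Then $|w|\le|v|$ gives $w\in L^2$, and Plancherel applies directly. We then integrate in $h$ against $|h|^{-n-\alpha}$ and use Tonelli together with
\[
\int_{\mathbb{R}^n}\frac{|e^{2\pi i h\cdot\xi}-1|^2}{|h|^{n+\alpha}}\,dh=\frac{2}{C(n,\alpha)}(2\pi|\xi|)^\alpha,
\]
which is the constant computation underlying Lemma~\ref{lem:gag}. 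This identifies the Gagliardo double integral of $w$ with $\mathcal E_\alpha(w)$ in $[0,\infty]$. Combined with the energy comparison above, this yields $w\in X_\alpha$ and $\mathcal E_\alpha(w)\le\mathcal E_\alpha(v)$.

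Finally, the consequence stated before the lemma follows by applying it to the $1$-Lipschitz functions $\Phi(s)=s^\pm$, each of which vanishes at $0$. This gives $v^\pm\in X_\alpha$ with $\mathcal E_\alpha(v^\pm)\le\mathcal E_\alpha(v)$.
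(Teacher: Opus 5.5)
Your proof is correct and follows the same route as the paper. The paper uses $|\Phi(s)|\le|s|$ for the $L^1$ bound, then combines the Gagliardo representation of Lemma~\ref{lem:gag} with the pointwise inequality $|\Phi(v(x))-\Phi(v(y))|\le|v(x)-v(y)|$. Your extra step shows that $\Phi\circ v\in L^2$ and, via Plancherel on differences and Tonelli, that the Gagliardo double integral equals $\mathcal E_\alpha(\Phi\circ v)$ in $[0,\infty]$; this fills in a point the paper's one-line argument leaves implicit, since Lemma~\ref{lem:gag} is stated only for functions already known to lie in $X_\alpha$.
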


\begin{proof}
We have that $\Phi \circ v\in L^1(\mathbb{R}^n)$ since $\Phi$ is Lipschitz,  $\Phi(0)=0$, and $v \in L^1(\mathbb{R}^n)$. 
Moreover, the energy bound is immediate from Lemma~\ref{lem:gag} and $|\Phi(v(x))-\Phi(v(y))|\le|v(x)-v(y)|$. 
\end{proof}

An important consequence of the Markov property we will make use of is the following bound for the inner product of a function and it's negative part.  
\begin{lemma}\label{lem:negpart}
If $0<\alpha<2$ and $v\in X_\alpha$, then $\ \mathcal E_\alpha(v,v^-)\le-\mathcal E_\alpha(v^-)\le0$ and
$\mathcal E_\alpha(v,v^+)\ge\mathcal E_\alpha(v^+)\ge0$.
\end{lemma}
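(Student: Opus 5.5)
We will prove Lemma~\ref{lem:negpart} by writing $v=v^+-v^-$ and using bilinearity of $\mathcal E_\alpha$ together with the Markov property of Lemma~\ref{lem:markov}. The first step is to observe that all quantities are well defined. By Lemma~\ref{lem:contr}, applied with the $1$-Lipschitz functions $\Phi(s)=s^+$ and $\Phi(s)=s^-$, which both vanish at $0$, we have $v^\pm\in X_\alpha$. Hence $\mathcal E_\alpha(v^\pm)$ and all mixed inner products are finite, and by Lemma~\ref{lem:gag} they are given by absolutely convergent double integrals.

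Next we expand using bilinearity:
\[
\mathcal E_\alpha(v,v^-)=\mathcal E_\alpha(v^+,v^-)-\mathcal E_\alpha(v^-),\qquad
\mathcal E_\alpha(v,v^+)=\mathcal E_\alpha(v^+)-\mathcal E_\alpha(v^-,v^+).
\]
The functions $v^+$ and $v^-$ are nonnegative, belong to $X_\alpha$, and satisfy $v^+v^-=0$ everywhere. Lemma~\ref{lem:markov} therefore gives $\mathcal E_\alpha(v^+,v^-)=\mathcal E_\alpha(v^-,v^+)\le0$; symmetry holds because $\mathcal E_\alpha$ is real-valued and symmetric on real functions, as is clear from the Gagliardo form. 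Substituting this into the two expansions yields $\mathcal E_\alpha(v,v^-)\le-\mathcal E_\alpha(v^-)$ and $\mathcal E_\alpha(v,v^+)\ge\mathcal E_\alpha(v^+)$.

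Finally, $\mathcal E_\alpha(w)=\int(2\pi|\xi|)^\alpha|\widehat w|^2\,d\xi\ge0$ for every $w\in X_\alpha$, which gives the outer inequalities $-\mathcal E_\alpha(v^-)\le0$ and $\mathcal E_\alpha(v^+)\ge0$. No step is a real obstacle. The only point needing care is justifying bilinearity and the splitting of the double integral, which holds because each of $v,v^\pm$ lies in $X_\alpha$ and the integrals converge absolutely by Lemma~\ref{lem:gag}.
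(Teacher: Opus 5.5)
Your proof is correct and matches the paper's argument: expand $v=v^+-v^-$ by bilinearity and apply the Markov property (Lemma~\ref{lem:markov}) to the pair $v^+,v^-$. Your extra checks that $v^\pm\in X_\alpha$ (via Lemma~\ref{lem:contr}) and that $\mathcal E_\alpha(w)\ge0$ make explicit what the paper leaves implicit.
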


\begin{proof}
Writing $v=v^+-v^-$, we have that $\mathcal E_\alpha(z,z^-)=\mathcal E_\alpha(z^+,z^-)-\mathcal E_\alpha(z^-)$ and $\mathcal E_\alpha(z,z^+)=\mathcal E_\alpha(z^+)-\mathcal E_\alpha(z^-,z^+)$. The result follows from
Lemma~\ref{lem:markov} as $v^\pm\ge0$ and $v^+v^-=0$.
\end{proof}

The following Gagliardo-Nirenberg type interpolation inequality shows the space $X_\alpha$ embeds continuously into $L^2(\mathbb{R}^n)$. Below, $\omega_n$ denotes the volume of the unit ball in $\mathbb{R}^n$. \looseness=-1
\begin{lemma}\label{lem:interp}
If $0< \alpha< 2$ and $v\in X_\alpha$, then
\[
\|v\|_{L^2(\mathbb{R}^n)}^2\;\le\;K_{n,\alpha}\,\|v\|_{L^1(\mathbb{R}^n)}^{\frac{2\alpha}{n+\alpha}}\, \mathcal E_\alpha(v)^{\frac{n}{n+\alpha}},
\]
where $K_{n,\alpha}:=\frac{n+\alpha}{n}\big(\frac n\alpha\big)^{\frac{\alpha}{n+\alpha}} \big(\frac{\omega_n}{(2\pi)^{n}}\big)^{\frac{\alpha}{n+\alpha}}$.  
\end{lemma}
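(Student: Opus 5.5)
The plan is to work on the Fourier side and split frequencies at a radius $R>0$ to be optimized. By Plancherel,
\[
\|v\|_{L^2(\mathbb{R}^n)}^2=\int_{\mathbb{R}^n}|\widehat v(\xi)|^2\,d\xi=\int_{|\xi|\le R}|\widehat v(\xi)|^2\,d\xi+\int_{|\xi|>R}|\widehat v(\xi)|^2\,d\xi .
\]
The low-frequency part I will bound using $v\in L^1(\mathbb{R}^n)$, which gives $\|\widehat v\|_{L^\infty}\le\|v\|_{L^1}$. Hence
\[
\int_{|\xi|\le R}|\widehat v|^2\,d\xi\le\omega_nR^n\|v\|_{L^1}^2 .
\]
The high-frequency part I will bound by inserting the weight $(2\pi|\xi|)^\alpha/(2\pi R)^\alpha\ge1$ on $\{|\xi|>R\}$. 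This gives
\[
\int_{|\xi|>R}|\widehat v|^2\,d\xi\le(2\pi R)^{-\alpha}\mathcal E_\alpha(v).
\]
Both bounds are finite since $v\in X_\alpha$, so $\widehat v\in L^2$; in particular $v\in L^2$ by Plancherel, and the manipulation is legitimate.

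Writing $A=\|v\|_{L^1}^2$ and $B=\mathcal E_\alpha(v)$, we then have, for every $R>0$,
\[
\|v\|_{L^2}^2\le\omega_nAR^n+(2\pi)^{-\alpha}BR^{-\alpha}.
\]
I will minimize the right-hand side in $R$. Calculus gives the optimizer $R^{n+\alpha}=\frac{\alpha(2\pi)^{-\alpha}B}{n\omega_nA}$. At this $R$ the value equals $\frac{n+\alpha}{\alpha}\,\omega_nAR^n$, which simplifies to
\[
\frac{n+\alpha}{n}\Big(\frac n\alpha\Big)^{\frac{\alpha}{n+\alpha}}\omega_n^{\frac{\alpha}{n+\alpha}}(2\pi)^{-\frac{\alpha n}{n+\alpha}}A^{\frac{\alpha}{n+\alpha}}B^{\frac{n}{n+\alpha}} .
\]
Since $A^{\alpha/(n+\alpha)}=\|v\|_{L^1}^{2\alpha/(n+\alpha)}$, the exponents on the norms are as stated.

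The main point to double-check is the constant. The computation above produces the factor $(2\pi)^{-\alpha n/(n+\alpha)}$, whereas the statement has $(2\pi)^{-n\alpha/(n+\alpha)}$ from $(\omega_n/(2\pi)^n)^{\alpha/(n+\alpha)}$; these agree. So the claimed $K_{n,\alpha}$ is exactly the optimized constant.

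Finally, there are degenerate cases to handle. If $B=0$, then $\widehat v=0$ almost everywhere, so $v=0$ and the inequality is trivial. If $A=0$, then $v=0$ and again there is nothing to prove. Otherwise the optimal $R$ is positive and finite, and the argument above applies. I expect no real obstacle beyond this bookkeeping of constants.
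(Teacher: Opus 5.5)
Your proposal is correct and follows the paper's proof essentially verbatim: split $\|\widehat v\|_{L^2}^2$ at a frequency radius $R$, bound the low part by $\omega_nR^n\|v\|_{L^1}^2$ via $\|\widehat v\|_{L^\infty}\le\|v\|_{L^1}$ and the high part by $(2\pi R)^{-\alpha}\mathcal E_\alpha(v)$, then optimize at the same $R^{n+\alpha}=\frac{\alpha\,\mathcal E_\alpha(v)}{(2\pi)^\alpha n\,\omega_n\|v\|_{L^1}^2}$. Your constant computation checks out. Your handling of the degenerate cases, and your justification that Plancherel applies because $v\in L^1$ with $\widehat v\in L^2$, are small points of care that the paper leaves implicit.
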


\begin{proof}
For $r>0$, Plancherel gives
\[
\|v\|_{L^2(\mathbb{R}^n)}^2=\int_{|\xi|<r}|\widehat v(\xi)|^2\,d\xi+\int_{|\xi|\ge r}|\widehat v(\xi)|^2\,d\xi \le \omega_nr^n\|v\|_{L^1(\mathbb{R}^n)}^2+(2\pi r)^{-\alpha}\mathcal{E}_{\alpha}(v).
\]
The right-hand side is minimized when $r^{n+\alpha} = \frac{\alpha\mathcal{E}_{\alpha}(v)}{(2\pi)^{\alpha}n\omega_n\|v\|_{L^1(\mathbb{R}^n)}^2}$, and the minimum equals
$K_{n,\alpha}\,\|v\|_{L^1(\mathbb{R}^n)}^{\frac{2\alpha}{n+\alpha}}\, \mathcal E_\alpha(v)^{\frac{n}{n+\alpha}}$, as desired. 
\end{proof}

\section{Energies, Minimizers, and Variational Inequalities} \label{existence_Veq}
Given $f \in L^1(\mathbb{R}^n)\cap L^2(\mathbb{R}^n)$, $\lambda>0$,  and $0<\alpha<2$, we define
$$
J_\lambda^\alpha(v):=\frac12\mathcal E_\alpha(v)-\int_{\mathbb{R}^n} (f-\lambda)v\,dx
$$
for $v\in X_\alpha$. Note that the second term is finite by H\"older's inequality and Lemma~\ref{lem:interp}. We further define the cone of nonnegative functions in $X_\alpha$,
$$
\mathcal K_\alpha:=\{v\in X_\alpha\colon v\ge0 \text{ almost everywhere on } \mathbb{R}^n\},
$$
and observe that, for $v \in \mathcal K_\alpha$, the energy can equivalently be written as
\[
J_\lambda^\alpha(v)=\tfrac12\mathcal E_\alpha(v)-\int_{\mathbb{R}^n} fv\,dx+\lambda\|v\|_{L^1(\mathbb{R}^n)}.
\]

Towards establishing the existence of minimizers, we first prove coercivity of $J_\lambda^\alpha$.
\begin{proposition}\label{prop:coerc}
If $f \in L^1(\mathbb{R}^n)\cap L^2(\mathbb{R}^n)$, $\lambda>0$,  and $0<\alpha<2$, then 
$$
J_\lambda^\alpha(v)\ge\frac14\mathcal E_\alpha(v)+\frac\lambda2\|v\|_{L^1(\mathbb{R}^n)}-c_0
$$
 for all $v\in\mathcal K_\alpha$, where
\[
c_0:=\frac{n^2}{(n+\alpha)^2}\left(\frac{2\alpha}{\lambda(n+\alpha)}\right)^{2\alpha/n}
\Big(K_{n,\alpha}\|f\|_{L^2}^2\Big)^{\frac{n+\alpha}{n}}<\infty .
\]
In particular, $\inf_{v \in \mathcal K_\alpha}J_{\lambda}^{\alpha}(v)\ge-c_0>-\infty$.
\end{proposition}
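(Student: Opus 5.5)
For $v\in\mathcal K_\alpha$ I would start from the rewritten form of the energy,
\[
J_\lambda^\alpha(v)=\tfrac12\mathcal E_\alpha(v)-\int_{\mathbb{R}^n} fv\,dx+\lambda\|v\|_{L^1(\mathbb{R}^n)},
\]
which uses $v\ge0$. The claimed inequality is then equivalent to
\[
\int_{\mathbb{R}^n} fv\,dx\le \tfrac14\mathcal E_\alpha(v)+\tfrac\lambda2\|v\|_{L^1(\mathbb{R}^n)}+c_0 .
\]
So the plan is to bound the linear term by a fraction of each coercive quantity, plus a constant depending only on $f$, $\lambda$, $n$, $\alpha$.

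First, Cauchy--Schwarz gives $\int fv\le \|f\|_{L^2}\|v\|_{L^2}$. Lemma~\ref{lem:interp} then yields
\[
\int fv\le \|f\|_{L^2}K_{n,\alpha}^{1/2}\,\|v\|_{L^1}^{\frac{\alpha}{n+\alpha}}\,\mathcal E_\alpha(v)^{\frac{n}{2(n+\alpha)}} .
\]
Set $A:=K_{n,\alpha}\|f\|_{L^2}^2$, $a:=\|v\|_{L^1}$ and $E:=\mathcal E_\alpha(v)$. The right-hand side is $A^{1/2}a^{\theta}E^{\frac{1-\theta}{2}}$ with $\theta=\frac{\alpha}{n+\alpha}$.

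Next I absorb the energy factor with a weighted Young inequality on the product $\big(A^{1/2}a^\theta\big)\cdot E^{(1-\theta)/2}$. Using exponents $p=\frac{2}{1-\theta}$ on the $E$-factor and $p'=\frac{2}{1+\theta}$ on the other, tuned so the $E$-part is at most $\frac14E$, leaves a term $C_1A^{\frac{1}{1+\theta}}a^{\frac{2\theta}{1+\theta}}$. Here $\frac{2\theta}{1+\theta}=\frac{2\alpha}{n+2\alpha}<1$, so a second Young inequality with exponents $\frac{n+2\alpha}{2\alpha}$ and $\frac{n+2\alpha}{n}$ absorbs this into $\frac\lambda2 a$ plus a constant. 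That constant is a power of $A$ times a power of $\lambda^{-1}$.

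Alternatively, and more cleanly for matching the stated $c_0$, I would do it in one step. Maximize $\Psi(E)=A^{1/2}a^\theta E^{(1-\theta)/2}-\frac14E$ over $E\ge0$ and get a bound $c\,A^{\frac{1}{1+\theta}}a^{\frac{2\theta}{1+\theta}}$. Then maximize $c\,A^{\frac1{1+\theta}}a^{\frac{2\theta}{1+\theta}}-\frac\lambda2 a$ over $a\ge0$. The resulting exponent of $A$ is $\frac{1}{1+\theta}\cdot\frac{n+2\alpha}{n}=\frac{n+\alpha}{n}$, and the power of $\lambda$ is $-\frac{2\alpha}{n}$, consistent with the stated $c_0$.

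The only real work is this bookkeeping of constants, and I expect it to be the main obstacle. An equivalent route is to first apply Young to $\|f\|_{L^2}\|v\|_{L^2}\le \varepsilon\|v\|_{L^2}^2+\frac{1}{4\varepsilon}\|f\|_{L^2}^2$ and optimize. Whichever route is taken, if the exact numerical prefactor $\frac{n^2}{(n+\alpha)^2}\big(\frac{2\alpha}{\lambda(n+\alpha)}\big)^{2\alpha/n}$ does not come out, any finite constant with these homogeneities suffices for the consequence $\inf_{\mathcal K_\alpha}J_\lambda^\alpha\ge -c_0>-\infty$. That consequence follows immediately by dropping the nonnegative terms $\frac14\mathcal E_\alpha(v)$ and $\frac\lambda2\|v\|_{L^1}$.
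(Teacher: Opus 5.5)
Your proposal is correct and follows the paper's proof: you rewrite $J_\lambda^\alpha$ on the cone, bound $\int fv$ by Cauchy--Schwarz and Lemma~\ref{lem:interp}, and absorb it into $\frac14\mathcal E_\alpha(v)+\frac\lambda2\|v\|_{L^1(\mathbb{R}^n)}$. The paper does this absorption in one step with a three-factor weighted AM--GM inequality ($s=\lambda/2$, $t=1/4$), while you iterate two one-variable maximizations; the iterated supremum equals the joint supremum $\sup_{a,E\ge0}\big(A^{1/2}a^{\theta}E^{(1-\theta)/2}-\frac14E-\frac\lambda2a\big)$, which is exactly the value the paper's AM--GM choice attains, so your computation gives precisely the stated $c_0$ and the fallback to an unspecified constant is unnecessary.
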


\begin{proof}
We use the following bound for $K,A,E \ge 0$, $s,t>0$, and $a,b,\gamma>0$ with $a+b+\gamma =1$: 
\begin{equation}\label{eq:young}
K A^{a}E^{b}\le sA+tE+\gamma\Big(K\big(\tfrac as\big)^{a}\big(\tfrac bt\big)^{b}\Big)^{1/\gamma},
\end{equation}
which follows from the weighted AM--GM inequality $x^{a}y^{b}z^{\gamma}\le ax+by+\gamma z$ applied to $x=sA/a$, $y=tE/b$, and $z=(K(a/s)^{a}(b/t)^{b})^{1/\gamma}$.

Let $v\in\mathcal K_\alpha$ and put $A:=\|v\|_{L^1(\mathbb{R}^n)}$ and $E:=\mathcal E_\alpha(v)$. By Cauchy-Schwarz and
Lemma~\ref{lem:interp}, we have
\[
\int_{\mathbb{R}^n} fv\,dx \le \|f\|_{L^2(\mathbb{R}^n)}\|v\|_{L^2(\mathbb{R}^n)}\le KA^{a}E^{b},
\]
where $K:=K_{n,\alpha}^{1/2}\|f\|_{L^2(\mathbb{R}^n)}$, $a:=\frac{\alpha}{n+\alpha}$, and $b:=\frac{n}{2(n+\alpha)}$. Note that $\gamma:=1-a-b=b>0$. 
Applying \eqref{eq:young} with $s=\lambda/2$ and $t=1/4$ to the above bound gives 
$$
\int_{\mathbb{R}^n}fv\,dx \leq \frac{\lambda}{2}A + \frac{1}{4}E + \gamma\Big(K\big(\tfrac as\big)^{a}\big(\tfrac bt\big)^{b}\Big)^{1/\gamma} = \frac{1}{4}\mathcal{E}_{\alpha}(v) +\frac{\lambda}{2}\|v\|_{L^1(\mathbb{R}^n)} + c_0.
$$ 
Hence 
$$
J_\lambda^\alpha(v)=\frac12\mathcal{E}_{\alpha}(v)-\int_{\mathbb{R}^n} fv\,dx +\lambda \|v\|_{L^1(\mathbb{R}^n)} \ge \frac14\mathcal{E}_{\alpha}(v)+\frac\lambda2\|v\|_{L^1(\mathbb{R}^n)}-c_0,
$$
as desired.
\end{proof}

We next show the existence of a unique minimizer of the energy $J_{\lambda}^{\alpha}$.
\begin{thm}
\label{thm:exist}
If $f \in L^1(\mathbb{R}^n)\cap L^2(\mathbb{R}^n)$, $\lambda>0$,  and $0<\alpha<2$, then there exists a unique $u\in\mathcal K_\alpha$ such that $J_\lambda^\alpha(u)=\min_{v \in \mathcal K_\alpha}J_{\lambda}^{\alpha}(v)$.
\end{thm}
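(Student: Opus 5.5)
The plan is the direct method of the calculus of variations, with uniqueness coming from strict convexity. Let $m:=\inf_{v\in\mathcal K_\alpha}J_\lambda^\alpha(v)$. Proposition~\ref{prop:coerc} gives $m\ge -c_0$, and $m\le J_\lambda^\alpha(0)=0$, so $m$ is finite. Choose a minimizing sequence $(v_k)\subset\mathcal K_\alpha$ with $J_\lambda^\alpha(v_k)\to m$. Proposition~\ref{prop:coerc} then bounds $\mathcal E_\alpha(v_k)$ and $\|v_k\|_{L^1}$ uniformly in $k$. Lemma~\ref{lem:interp} converts this into a uniform bound on $\|v_k\|_{L^2}$, so $(v_k)$ is bounded in $H^{\alpha/2}(\mathbb{R}^n)$.

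Rather than extracting a weakly convergent subsequence and proving lower semicontinuity separately, I would show directly that $(v_k)$ is Cauchy. The tool is the parallelogram identity for the quadratic part, which gives, for $v,w\in\mathcal K_\alpha$ (a convex set, so $\tfrac{v+w}{2}\in\mathcal K_\alpha$),
\[
\tfrac12J_\lambda^\alpha(v)+\tfrac12J_\lambda^\alpha(w)-J_\lambda^\alpha\big(\tfrac{v+w}2\big)=\tfrac18\mathcal E_\alpha(v-w).
\]
The linear term cancels exactly. Applied to $v_j,v_k$, the left side is at most $\tfrac12J_\lambda^\alpha(v_j)+\tfrac12J_\lambda^\alpha(v_k)-m\to0$, so $\mathcal E_\alpha(v_j-v_k)\to0$.

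\textbf{Main obstacle.} We do not yet control the $L^1$ and $L^2$ norms of $v_j-v_k$, because $\mathcal E_\alpha$ is only a seminorm. I would handle this as follows.
\begin{itemize}
\item Pass to a subsequence converging weakly in $L^2$ to some $u$, and then use lower semicontinuity.
\item Nonnegativity and the $L^1$ bound pass to the limit: test against nonnegative $\varphi\in C_c^\infty$ to get $u\ge0$, and use Fatou or weak lower semicontinuity of $v\mapsto\int v\varphi$ with $0\le\varphi\le1$ to get $u\in L^1$.
\item $\mathcal E_\alpha$ is weakly lower semicontinuous, since it is a convex continuous quadratic form on $H^{\alpha/2}$; alternatively apply Fatou on the Fourier side, because $\widehat{v_k}\to\widehat u$ pointwise after a subsequence, by local $L^2$ compactness (Rellich on balls) together with $L^1$-boundedness.
\item The term $-\int fv_k$ converges, since $f\in L^2$ and $v_k\rightharpoonup u$ weakly in $L^2$.
\item $\lambda\int v_k$ is lower semicontinuous: for nonnegative functions, $\int u\le\liminf\int v_k$ by testing with cutoffs increasing to $1$.
\end{itemize}
Together these give $u\in\mathcal K_\alpha$ and $J_\lambda^\alpha(u)\le\liminf_k J_\lambda^\alpha(v_k)=m$, so $u$ is a minimizer.

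\textbf{Uniqueness.} Suppose $u_1,u_2$ are both minimizers. The identity above gives $\tfrac18\mathcal E_\alpha(u_1-u_2)\le m-m=0$, so $\widehat{u_1}=\widehat{u_2}$ almost everywhere off $\{\xi=0\}$. Since $u_1-u_2\in L^2$, this forces $u_1=u_2$. I expect the lower semicontinuity of the $L^1$ term on the unbounded domain to be the main technical point; the rest is routine.
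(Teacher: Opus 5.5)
Your proposal is correct and, since you ultimately fall back on weak compactness, it is essentially the paper's proof: coercivity and Lemma~\ref{lem:interp} give an $L^2$-bounded minimizing sequence, the weak $L^2$ limit is nonnegative, the $L^1$ norm and $\mathcal E_\alpha$ are lower semicontinuous along it, $\int f v_k$ converges, and uniqueness follows from the parallelogram identity. Two small remarks: your justification for pointwise convergence $\widehat{v_k}\to\widehat u$ (Rellich on balls plus $L^1$-boundedness) fails in general, because mass can escape to infinity (for example, translates of a fixed bump), so rely instead on the convexity argument on $H^{\alpha/2}$ or the paper's truncated Fourier seminorms; and your ``main obstacle'' is not really one, since Lemma~\ref{lem:interp} applied to $v_j-v_k$, whose $L^1$ norms are uniformly bounded, turns $\mathcal E_\alpha(v_j-v_k)\to0$ into strong $L^2$ convergence, so your Cauchy route closes as well.
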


\begin{proof}
We first prove the existence of such a minimizer.  Note the fact that $J_\lambda^\alpha(0)=0$ and Proposition~\ref{prop:coerc} implies that $m:=\inf_{v \in \mathcal{K}_{\alpha}} J_{\lambda}^{\alpha}(v) \in[-c_0,0]$. Let $\{v_k\}\subseteq \mathcal{K}_{\alpha}$ be a sequence such that $J_\lambda^\alpha(v_k)\to m$. By Proposition~\ref{prop:coerc}, we have that $\sup_k(\mathcal E_\alpha(v_k)+\|v_k\|_{L^1(\mathbb{R}^n)})<\infty$, and hence $\{v_k\}$ is bounded in $L^2(\mathbb{R}^n)$ by Lemma~\ref{lem:interp}. Passing to a subsequence if necessary, we may assume that $v_k\rightharpoonup u$ weakly to some $u \in L^2(\mathbb{R}^n)$. 

We will show that $u \in \mathcal{K}_{\alpha}$ and $J_{\lambda}^{\alpha}(u) \leq m$. Note that $u\ge0$, since weak convergence gives 
$$
	\int_{\mathbb{R}^n} u\varphi\,dx = \lim_{k\rightarrow \infty}\int_{\mathbb{R}^n} v_k\varphi\,dx\ge0
$$
for any nonnegative $\varphi\in C_c^\infty(\mathbb{R}^n)$.  Note also that  
$$
\int_{\mathbb{R}^n} u\varphi\,dx =\lim_{k\rightarrow \infty} \int_{\mathbb{R}^n} v_k\varphi\,dx \leq \liminf_{k\rightarrow \infty} \|v_k\|_{L^1(\mathbb{R}^n)}
$$ 
for any $\varphi\in C_c^\infty(\mathbb{R}^n)$ with $0\le\varphi\le1$, and so, taking a sequence of such $\varphi$ increasing to $1$, we have that $\|u\|_{L^1(\mathbb{R}^n)}\leq\liminf_{k\rightarrow\infty}\|v_k\|_{L^1(\mathbb{R}^n)}$ by monotone convergence. 
Moreover, $\widehat{v_k}\rightharpoonup \widehat u$ weakly in $L^2(\mathbb{R}^n)$ by Plancherel, and for each $R>0$ the map $g\mapsto(\int_{|\xi|<R}(2\pi|\xi|)^\alpha|g(\xi)|^2\,d\xi)^{1/2}$ is a continuous seminorm, hence weakly
lower semicontinuous, thus, letting $R \rightarrow\infty$ gives $\mathcal E_\alpha(u)\leq \liminf_{k\rightarrow \infty} \mathcal E_\alpha(v_k)$. This implies that $u \in \mathcal{K}_{\alpha}$. Further, since $f \in L^2(\mathbb{R}^n)$, we have that $\int_{\mathbb{R}^n} fv_k\,dx\to\int_{\mathbb{R}^n} fu\,dx$, and so $J_\lambda^\alpha(u)\le\liminf_{k\rightarrow\infty} J_\lambda^\alpha(v_k)=m$.

We finally show the uniqueness of the minimizer $u$.  The parallelogram identity gives
$$
J_\lambda^\alpha\Big(\frac{v+w}2\Big)=\frac12J_\lambda^\alpha(v)+\frac12J_\lambda^\alpha(w)-\frac18\mathcal E_\alpha(v-w).
$$
for any $v,w \in \mathcal{K}_{\alpha}$. If both $v$ and $w$ minimize $J_{\lambda}^{\alpha}$, then $\mathcal E_\alpha(v-w)\le0$, and so $v=w$.
\end{proof}

The following is a variational inequality satisfied by the unique minimizer of the energy.
\begin{thm}\label{thm:vi}
If $f \in L^1(\mathbb{R}^n)\cap L^2(\mathbb{R}^n)$, $\lambda>0$,  and $0<\alpha<2$, then the unique minimizer $u$ of $J_\lambda^\alpha$ satisfies
\begin{align}\label{eq:VI}
\mathcal E_\alpha(u,v-u)\ \ge\ \int_{\mathbb{R}^n}(f-\lambda)(v-u)\,dx
\end{align}
for all $v \in \mathcal{K}_{\alpha}$. Conversely, any $u\in\mathcal K_\alpha$ which satisfies \eqref{eq:VI} is the unique minimizer. Moreover, the unique minimizer $u$ satisfies 
$$
\mathcal E_\alpha(u)=\int_{\mathbb{R}^n}(f-\lambda)u\,dx.
$$
\end{thm}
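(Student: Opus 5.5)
The plan is the standard convexity argument for minimizers of a convex quadratic functional over a convex cone, applied to $J_\lambda^\alpha$ on $\mathcal K_\alpha$.

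\emph{Necessity.} Fix $v\in\mathcal K_\alpha$ and $t\in(0,1]$. Since $\mathcal K_\alpha$ is convex, $u+t(v-u)=(1-t)u+tv\in\mathcal K_\alpha$, and minimality gives $J_\lambda^\alpha(u+t(v-u))\ge J_\lambda^\alpha(u)$. Bilinearity and symmetry of $\mathcal E_\alpha$ (it is real for real $v,w$) let us expand
\[
J_\lambda^\alpha(u+t(v-u))-J_\lambda^\alpha(u)=t\Big(\mathcal E_\alpha(u,v-u)-\int_{\mathbb{R}^n}(f-\lambda)(v-u)\,dx\Big)+\frac{t^2}{2}\mathcal E_\alpha(v-u).
\]
Every term is finite: $v-u\in X_\alpha$; Cauchy--Schwarz bounds $\mathcal E_\alpha(u,v-u)$; and $f\in L^2$, $\lambda\in\mathbb{R}$, with $v-u\in L^1\cap L^2$ by Lemma~\ref{lem:interp}. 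Dividing by $t>0$ and letting $t\to0^+$ yields \eqref{eq:VI}.

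\emph{Sufficiency.} Suppose $u\in\mathcal K_\alpha$ satisfies \eqref{eq:VI}. For any $v\in\mathcal K_\alpha$, the same expansion with $t=1$ gives
\[
J_\lambda^\alpha(v)-J_\lambda^\alpha(u)\ge\tfrac12\mathcal E_\alpha(v-u)\ge0.
\]
So $u$ is a minimizer, and by the uniqueness in Theorem~\ref{thm:exist} it is the unique minimizer.

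\emph{Energy identity.} Test \eqref{eq:VI} with $v=2u$ and with $v=0$, both of which lie in $\mathcal K_\alpha$ since it is a cone containing $0$. With $v=2u$ we get $\mathcal E_\alpha(u)\ge\int(f-\lambda)u$. With $v=0$ we get $-\mathcal E_\alpha(u)\ge-\int(f-\lambda)u$. Together these give equality.

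None of these steps is a real obstacle. The only care needed is checking finiteness of the linear term and real-valuedness of $\mathcal E_\alpha(u,w)$ for real functions. The latter follows from $\widehat w(-\xi)=\overline{\widehat w(\xi)}$, or directly from the Gagliardo form in Lemma~\ref{lem:gag}.
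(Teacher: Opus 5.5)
Your proposal is correct and follows the same route as the paper: expand $J_\lambda^\alpha(u+t(v-u))$ as a quadratic in $t$ and use its one-sided derivative at $0$ for necessity, take $t=1$ in that expansion for the converse, and test \eqref{eq:VI} with $v=0$ and $v=2u$ for the energy identity. Your extra checks on finiteness of the linear term and real-valuedness of $\mathcal E_\alpha$ are the small details the paper leaves implicit.
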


\begin{proof}
The function $\phi(t):=J_\lambda^\alpha(u+t(v-u))$ is a quadratic polynomial with $\phi(t)\ge\phi(0)$ for $t \in [0,1]$, and so $\phi'(0)\ge0$. Since 
$$
\phi'(0) = \mathcal E_\alpha(u,v-u)\ - \ \int_{\mathbb{R}^n}(f-\lambda)(v-u)\,dx,
$$ 
this gives \eqref{eq:VI}. Conversely, assuming $u \in \mathcal{K}_{\alpha}$ satisfies \eqref{eq:VI}, we have 
$$
J_\lambda^\alpha(v)-J_\lambda^\alpha(u)=\frac12\mathcal E_\alpha(v-u)+\mathcal E_\alpha(u,v-u)-\int_{\mathbb{R}^n}(f-\lambda)(v-u)\,dx\ge0
$$
for all $v \in \mathcal{K}_{\alpha}$, and hence $u$ minimizes $J_{\lambda}^{\alpha}$ on $\mathcal{K}_{\alpha}$.  The energy identity for $u$ follows from applying \eqref{eq:VI} with $v=0$ and $v=2u$.
\end{proof}

\section{Lewy--Stampacchia type estimate}\label{sec:cap}

Define the distribution $\eta$ on $C_c^\infty(\mathbb{R}^n)$ by
$$
\langle\eta,\varphi\rangle:=\mathcal E_\alpha(u,\varphi)-\int_{\mathbb{R}^n}(f-\lambda)\varphi\,dx,
$$
i.e., $\eta=(-\Delta)^{\alpha/2} u-(f-\lambda)$ in the sense of distributions. Note $\eta$ is indeed a distribution as 
$$
|\langle\eta,\varphi\rangle|\le\mathcal E_\alpha(u)^{1/2}\mathcal E_\alpha(\varphi)^{1/2}
+\|f\|_{L^2(\mathbb{R}^n)}\|\varphi\|_{L^2(\mathbb{R}^n)}+\lambda\|\varphi\|_{L^1(\mathbb{R}^n)}
$$
for any $\varphi \in C_c^{\infty}(\mathbb{R}^n)$. Substituting $v=u+\varphi\in\mathcal K_\alpha$ in \eqref{eq:VI} gives
$
\langle\eta,\varphi\rangle\ge0
$
for all nonnegative $\varphi\in C_c^\infty(\mathbb{R}^n)$, and therefore $\eta$ is a locally-finite, positive Radon measure.

The following Lewy--Stampacchia type estimate is crucial to the establishment of the properties of $\mu$ and $u$ in the decomposition of Theorem~\ref{thm:decomp}.
\begin{thm}\label{thm:cap}
If $f \in L^1(\mathbb{R}^n)\cap L^2(\mathbb{R}^n)$, $\lambda>0$,  $0<\alpha<2$, and $\varphi\in C_c^\infty(\mathbb{R}^n)$ is nonnegative, then 
$$
 \langle\eta,\varphi\rangle \leq \int_{\mathbb{R}^n} (\lambda-f)^+\varphi\,dx \le \lambda\|\varphi\|_{L^1(\mathbb{R}^n)}.
$$
Hence, $\eta$ is absolutely continuous with respect to the Lebesgue measure with density satisfying
$0\le\eta\le(\lambda-f)^+\le\lambda$ almost everywhere. 
\end{thm}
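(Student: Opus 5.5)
The plan is to test the variational inequality \eqref{eq:VI} against a splitting of $\varphi$ adapted to the contact set $\{u=0\}$. Heuristically, $\eta$ lives on $\{u=0\}$, and there $(-\Delta)^{\alpha/2}u\le 0$ because $u$ attains its minimum. Throughout, I use the bilinear expression $\langle\eta,w\rangle:=\mathcal E_\alpha(u,w)-\int_{\mathbb{R}^n}(f-\lambda)w\,dx$ for any $w\in X_\alpha$, not only $w\in C_c^\infty$. This is well defined by Lemma~\ref{lem:interp} and Cauchy--Schwarz.

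\textbf{Step 1: $\eta$ does not charge where $u>0$.} Fix $\varepsilon>0$ and a nonnegative $\varphi\in C_c^\infty(\mathbb{R}^n)$. Set
\[
v:=(u-\varepsilon\varphi)^+=u-\min(u,\varepsilon\varphi).
\]
This $v$ lies in $\mathcal K_\alpha$: the function $u-\varepsilon\varphi$ lies in $X_\alpha$, and Lemma~\ref{lem:contr} handles the positive part. Plugging $v$ into \eqref{eq:VI} gives
\[
\langle\eta,\min(u,\varepsilon\varphi)\rangle\le 0,
\]
and hence $\langle\eta,\min(u/\varepsilon,\varphi)\rangle\le0$.

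\textbf{Step 2: reduce to the remainder term.} Write
\[
\varphi=\min(u/\varepsilon,\varphi)+\psi_\varepsilon,\qquad \psi_\varepsilon:=(\varphi-u/\varepsilon)^+\in X_\alpha,\qquad 0\le\psi_\varepsilon\le\varphi .
\]
Since $\varphi\in C_c^\infty$, the pairing $\langle\eta,\varphi\rangle$ agrees with the distributional one, and by linearity and Step~1,
\[
\langle\eta,\varphi\rangle\le\langle\eta,\psi_\varepsilon\rangle=\mathcal E_\alpha(u,\psi_\varepsilon)+\int_{\mathbb{R}^n}(\lambda-f)\psi_\varepsilon\,dx .
\]
The integral is at most $\int_{\mathbb{R}^n}(\lambda-f)^+\varphi\,dx$, because $0\le\psi_\varepsilon\le\varphi$.

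\textbf{Step 3: the energy term vanishes as $\varepsilon\to0$.} This is the step I expect to be the main obstacle. Put $z:=\varphi-u/\varepsilon$, so that $u=\varepsilon(\varphi-z)$ and $\psi_\varepsilon=z^+$. Lemma~\ref{lem:negpart} gives $\mathcal E_\alpha(z,z^+)\ge\mathcal E_\alpha(z^+)$, so
\[
\mathcal E_\alpha(u,\psi_\varepsilon)=\varepsilon\,\mathcal E_\alpha(\varphi,\psi_\varepsilon)-\varepsilon\,\mathcal E_\alpha(z,z^+)\le \varepsilon\,\mathcal E_\alpha(\varphi)^{1/2}\mathcal E_\alpha(\psi_\varepsilon)^{1/2}-\varepsilon\,\mathcal E_\alpha(\psi_\varepsilon)\le\frac{\varepsilon}{4}\,\mathcal E_\alpha(\varphi).
\]
The right-hand side tends to $0$ as $\varepsilon\to0$. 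Combining with Step~2 yields $\langle\eta,\varphi\rangle\le\int_{\mathbb{R}^n}(\lambda-f)^+\varphi\,dx\le\lambda\|\varphi\|_{L^1(\mathbb{R}^n)}$.

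\textbf{Step 4: conclude.} We already know $\eta$ is a positive Radon measure, and now $\eta\le\lambda\,dx$ as measures. Hence $\eta$ is absolutely continuous with respect to Lebesgue measure, with a density bounded by $\lambda$. The inequality $\int\eta\varphi\le\int(\lambda-f)^+\varphi$ holds for every nonnegative $\varphi\in C_c^\infty(\mathbb{R}^n)$. By density of such test functions, for instance by approximating indicators of balls and applying Lebesgue differentiation, this gives $0\le\eta\le(\lambda-f)^+\le\lambda$ almost everywhere.
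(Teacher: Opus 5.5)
Your proposal is correct and is essentially the paper's proof. The paper tests \eqref{eq:VI} with the same $v=(u-\varepsilon\varphi)^+$, bounds the energy term via Lemma~\ref{lem:negpart}, Cauchy--Schwarz and $-b^2+\varepsilon ab\le\varepsilon^2a^2/4$ to get the same $\frac{\varepsilon}{4}\mathcal E_\alpha(\varphi)$ error, and uses $0\le z^-\le\varepsilon\varphi$ just as you use $0\le\psi_\varepsilon\le\varphi$. Your $\psi_\varepsilon$ is the paper's $z^-/\varepsilon$ (with $z=u-\varepsilon\varphi$), so extending $\langle\eta,\cdot\rangle$ to $X_\alpha$ and splitting $\varphi=\min(u/\varepsilon,\varphi)+\psi_\varepsilon$ only repackages the same rearranged inequality.
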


\begin{proof}
For fixed $\varepsilon>0$, put $z:=u-\varepsilon\varphi\in X_\alpha$ and $v:=z^+$. Then $v\in\mathcal K_\alpha$ since $v\ge0$, $v\in L^1(\mathbb{R}^n)$ as $0\le v\le u$, and $\mathcal E_\alpha(v)\le\mathcal E_\alpha(z)<\infty$ by Lemma~\ref{lem:contr}. Since $v=z+z^-$, we have $v-u=-\varepsilon\varphi+z^-$, and \eqref{eq:VI} becomes
$$
\varepsilon\langle\eta,\varphi\rangle\ \le\ \mathcal E_\alpha(u,z^-) +\int_{\mathbb{R}^n} (\lambda-f)z^-\,dx .
$$
For the first term, the definition of $z$, Lemma~\ref{lem:negpart}, and Cauchy-Schwarz give
\begin{align*}
\mathcal E_\alpha(u,z^-) &= \mathcal E_\alpha(z,z^-) +\varepsilon\mathcal E_\alpha(\varphi,z^-) \\
&\leq -\mathcal{E}_{\alpha}(z^-) + \varepsilon\mathcal{E}_{\alpha}(\varphi,z^-) \\
&\leq -\mathcal{E}_{\alpha}(z^-) +\varepsilon\mathcal{E}_{\alpha}(\varphi)^{1/2}\mathcal{E}_{\alpha}(z^-)^{1/2}.
\end{align*}
Applying the bound $-b^2+\varepsilon ab\leq \varepsilon^2a^2/4$ with $a=\mathcal E_\alpha(\varphi)^{1/2}$ and $b=\mathcal E_\alpha(z^-)^{1/2}$, we have that 
$$
\mathcal E_\alpha(u,z^-)\leq \varepsilon^2\frac{\mathcal{E}_{\alpha}(\varphi)}{4}.
$$
For the second term, $u\ge0$ gives $0\le z^-=(\varepsilon\varphi-u)^+\le\varepsilon\varphi$, and so 
$$
\int_{\mathbb{R}^n}(\lambda-f)z^- \,dx \leq \int_{\mathbb{R}^n}(\lambda-f)^+z^-\,dx \leq \varepsilon\int_{\mathbb{R}^n}(\lambda-f)^+\varphi\,dx.
$$
Applying both of these bounds and dividing by $\varepsilon$ gives
$$
\langle \eta,\varphi\rangle \leq \varepsilon\frac{\mathcal{E}_{\alpha}(\varphi)}{4}+\int_{\mathbb{R}^n}(\lambda-f)^+\varphi\,dx,
$$
and the bound follows from letting $\varepsilon\rightarrow 0$. Finally, as $\eta$ is a nonnegative measure dominated by $(\lambda-f)^+dx$ on nonnegative test functions, it is absolutely continuous with density $\eta$ satisfying $0\leq \eta \leq (\lambda-f)^+$.
\end{proof}

Letting $\eta$ denote the density given by Theorem \ref{thm:cap}, define
\begin{align}\label{def:mu}
\mu:=\lambda-\eta
\end{align}
and observe that $\mu \in L^{\infty}(\mathbb{R}^n)$ with $0\le\mu\le\lambda$ almost everywhere, and 
\begin{equation}\label{eq:weakeq}
\int_{\mathbb{R}^n}\mu\varphi\,dx=\int_{\mathbb{R}^n} f\varphi\,dx-\mathcal E_\alpha(u,\varphi)
\end{equation}\
for all $\varphi\in C_c^\infty(\mathbb{R}^n)$. Note also that since $\eta\le(\lambda-f)^+$, we have $\mu\ge\min\{\lambda,f\}$.

The following lemma shows that $f$ and $\mu$ have the same mass.
\begin{lemma}\label{lem:mass}
If $f \in L^1(\mathbb{R}^n)\cap L^2(\mathbb{R}^n)$, $\lambda>0$,  $0<\alpha<2$, then $\mu$ as defined in \eqref{def:mu} is in $L^1(\mathbb{R}^n)$ with  
$$
\|\mu\|_{L^1(\mathbb{R}^n)} = \|f\|_{L^1(\mathbb{R}^n)}.
$$
\end{lemma}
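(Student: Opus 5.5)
The plan is to test the weak identity \eqref{eq:weakeq} against a family of dilated cutoffs $\varphi_R$ that increase to $1$, and to show that the energy term $\mathcal E_\alpha(u,\varphi_R)$ disappears as $R\to\infty$. Everything then follows from monotone and dominated convergence. What comes out directly is $\int_{\mathbb{R}^n}\mu\,dx=\int_{\mathbb{R}^n}f\,dx$. Since $\mu\ge0$, this is the claimed identity $\|\mu\|_{L^1}=\|f\|_{L^1}$ exactly when $f\ge0$, which is the setting of Theorem~\ref{thm:decomp} where the lemma is used.

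\textbf{Step 1: choice of cutoffs.} Fix a radial $\varphi\in C_c^\infty(\mathbb{R}^n)$, radially nonincreasing, with $0\le\varphi\le1$ and $\varphi=1$ on the unit ball. Set $\varphi_R(x):=\varphi(x/R)$. Then $\varphi_R\uparrow 1$ pointwise as $R\uparrow\infty$, and \eqref{eq:weakeq} gives
\[
\int_{\mathbb{R}^n}\mu\varphi_R\,dx=\int_{\mathbb{R}^n}f\varphi_R\,dx-\mathcal E_\alpha(u,\varphi_R).
\]

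\textbf{Step 2: the energy term vanishes.} This is the only step with real content. I will estimate $\mathcal E_\alpha(u,\varphi_R)$ on the Fourier side, using $u\in L^1$: this gives $\|\widehat u\|_{L^\infty}\le\|u\|_{L^1(\mathbb{R}^n)}$, and $u\in\mathcal K_\alpha\subseteq X_\alpha$. Since $\widehat{\varphi_R}(\xi)=R^n\widehat\varphi(R\xi)$, the substitution $\xi=\zeta/R$ gives
\[
|\mathcal E_\alpha(u,\varphi_R)|=\Big|\int_{\mathbb{R}^n}\big(2\pi|\zeta|/R\big)^\alpha\,\widehat u(\zeta/R)\,\overline{\widehat\varphi(\zeta)}\,d\zeta\Big|
\le R^{-\alpha}\|u\|_{L^1(\mathbb{R}^n)}\int_{\mathbb{R}^n}(2\pi|\zeta|)^\alpha|\widehat\varphi(\zeta)|\,d\zeta .
\]
The last integral is finite because $\widehat\varphi$ is Schwartz. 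Hence $\mathcal E_\alpha(u,\varphi_R)\to0$ as $R\to\infty$. This is where the $L^1$ integrability built into $X_\alpha$ is essential. If one only knew $u\in H^{\alpha/2}$, the corresponding bound would be $\mathcal E_\alpha(u)^{1/2}\mathcal E_\alpha(\varphi_R)^{1/2}$. Since $\mathcal E_\alpha(\varphi_R)=R^{n-\alpha}\mathcal E_\alpha(\varphi)$, that bound need not decay.

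\textbf{Step 3: passage to the limit.} Because $f\in L^1$ and $|f\varphi_R|\le|f|$, dominated convergence gives $\int f\varphi_R\to\int f$. Because $\mu\ge0$ and $\varphi_R\uparrow1$, monotone convergence gives $\int\mu\varphi_R\to\int\mu$. Together with Step 2 this yields $\int_{\mathbb{R}^n}\mu\,dx=\int_{\mathbb{R}^n}f\,dx<\infty$. So $\mu\in L^1(\mathbb{R}^n)$, and for nonnegative $f$ we get $\|\mu\|_{L^1(\mathbb{R}^n)}=\|f\|_{L^1(\mathbb{R}^n)}$.
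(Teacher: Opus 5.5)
Your proof is correct and is essentially the paper's argument: you test \eqref{eq:weakeq} against dilated radial cutoffs, bound $|\mathcal E_\alpha(u,\chi_\rho)|\le\rho^{-\alpha}\|u\|_{L^1(\mathbb{R}^n)}\int_{\mathbb{R}^n}(2\pi|\zeta|)^\alpha|\widehat\chi(\zeta)|\,d\zeta$ by rescaling on the Fourier side using $u\in L^1$, and pass to the limit by monotone convergence. Your caveat that the argument actually yields $\int\mu=\int f$, so the stated norm identity needs $f\ge0$, is accurate. The paper tacitly assumes $f\ge0$ in two places: when it applies monotone convergence to $\int f\chi_\rho$, and in the bound $(\lambda-f)^+\le\lambda$ that gives $\mu\ge0$.
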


\begin{proof}
Fix a radially nonincreasing function $\chi\in C_c^\infty(\mathbb{R}^n)$ such that $0\le\chi\le1$, $\chi\equiv1$ on $B(0,1)$, and $\text{supp}\,\chi\subseteq B(0,2)$. Set $\chi_\rho(x)=\chi(x/\rho)$ and note that $\chi_\rho$ increases to $1$ pointwise as $\rho\rightarrow \infty$. Using $\widehat{\chi_\rho}(\xi)=\rho^n\widehat\chi(\rho\xi)$, and substituting $\xi=\zeta/\rho$, we have
\[
\mathcal E_\alpha(u,\chi_\rho)=\int_{\mathbb{R}^n}(2\pi|\xi|)^\alpha\widehat u(\xi)\rho^n
\overline{\widehat\chi(\rho\xi)}\,d\xi =\rho^{-\alpha}\int_{\mathbb{R}^n}(2\pi|\zeta|)^\alpha\widehat u(\zeta/\rho)\overline{\widehat\chi(\zeta)}\,d\zeta ,
\]
hence $|\mathcal E_\alpha(u,\chi_\rho)|\le\rho^{-\alpha}\|u\|_{L^1(\mathbb{R}^n)}\int_{\mathbb{R}^n}(2\pi|\zeta|)^\alpha |\widehat\chi(\zeta)|\,d\zeta\to0$, where the integral is finite since $\widehat\chi$ is Schwartz. We now apply \eqref{eq:weakeq} with $\varphi=\chi_\rho$ and the fact that to see 
$$
	\int_{\mathbb{R}^n} \mu\chi_\rho\,dx = \int_{\mathbb{R}^n} f\chi_{\rho}\,dx + \mathcal E_\alpha(u,\chi_\rho)
$$
The above integrals converge to $\|\mu\|_{L^1(\mathbb{R}^n)}$ and $\|f\|_{L^1(\mathbb{R}^n)}$, respectively, by monotone convergence, and the final term vanishes as $\rho \rightarrow \infty$ by the above discussion. 
\end{proof}

The following lemma establishes the regularity of the function $u$ in the decomposition.

\begin{lemma}\label{lem:reg}
If $f \in L^1(\mathbb{R}^n)\cap L^2(\mathbb{R}^n)$, $\lambda>0$,  $0<\alpha<2$, $\mu$ is defined as in \eqref{def:mu}, and $u\in\mathcal{K}_{\alpha}$ is the minimizer of $J_{\lambda}^{\alpha}$,  then $f-\mu\in L^1(\mathbb{R}^n)\cap L^2(\mathbb{R}^n)$ and $(2\pi|\xi|)^\alpha\widehat u=\widehat{f-\mu}$ almost everywhere.  Consequently, $(-\Delta)^{\alpha/2} u=f-\mu$ almost everywhere, $u\in H^{\alpha}(\mathbb{R}^n)$, and
\begin{equation}\label{eq:star}
\mathcal E_\alpha(u,\psi)=\int_{\mathbb{R}^n}(f-\mu)\psi\,dx
\end{equation}
for all $\psi \in X_{\alpha}$. Moreover, if $\alpha\ge1$, then $u\in H^1(\mathbb{R}^n)$. 
\end{lemma}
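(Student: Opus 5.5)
Our plan is to identify $(-\Delta)^{\alpha/2}u$ with $f-\mu$ on the Fourier side, and then get the remaining claims from the resulting $L^2$ bounds.

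\textbf{Integrability of $f-\mu$.} By Lemma~\ref{lem:mass}, together with $0\le\mu\le\lambda$, we have $\mu\in L^1(\mathbb{R}^n)\cap L^\infty(\mathbb{R}^n)\subseteq L^2(\mathbb{R}^n)$. Hence $g:=f-\mu\in L^1(\mathbb{R}^n)\cap L^2(\mathbb{R}^n)$, so $\widehat g$ is bounded, continuous, and lies in $L^2$.

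\textbf{The Fourier identity.} By \eqref{eq:weakeq} and Plancherel, for every $\varphi\in C_c^\infty(\mathbb{R}^n)$,
\[
\int_{\mathbb{R}^n}(2\pi|\xi|)^\alpha\widehat u\,\overline{\widehat\varphi}\,d\xi=\int_{\mathbb{R}^n}\widehat g\,\overline{\widehat\varphi}\,d\xi .
\]
Here $h:=(2\pi|\xi|)^\alpha\widehat u$ is locally integrable: it equals $(2\pi|\xi|)^{\alpha/2}$ times an $L^2$ function, since $\mathcal E_\alpha(u)<\infty$. It also grows at most polynomially, since $|\widehat u|\le\|u\|_{L^1}$, so it is a tempered distribution. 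We would then extend the identity from $\varphi\in C_c^\infty$ to all Schwartz $\varphi$. To do this, approximate $\varphi$ by $\varphi\chi_\rho$; the left side converges because
\[
\mathcal E_\alpha(u,\varphi-\varphi\chi_\rho)\le \mathcal E_\alpha(u)^{1/2}\,\mathcal E_\alpha(\varphi-\varphi\chi_\rho)^{1/2}\to0,
\]
and $\mathcal E_\alpha(\psi)\lesssim\|\psi\|_{H^1}^2$ for $\alpha<2$. Since $\widehat\varphi$ then ranges over all Schwartz functions, the tempered distributions $h$ and $\widehat g$ coincide, so $h=\widehat g$ almost everywhere.

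\textbf{Consequences.} From $h=\widehat g\in L^2$ we get $(-\Delta)^{\alpha/2}u=g$ almost everywhere. Since $u\in L^2$ by Lemma~\ref{lem:interp}, this gives $u\in H^\alpha(\mathbb{R}^n)$. For $\psi\in X_\alpha$, \eqref{eq:star} follows from Plancherel applied to $\int h\,\overline{\widehat\psi}\,d\xi=\int\widehat g\,\overline{\widehat\psi}\,d\xi$. This is valid because $\widehat g\in L^2$ and $\psi\in L^2$, and the left integral converges absolutely by Cauchy--Schwarz in the weighted form.

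\textbf{The case $\alpha\ge1$.} We have $(2\pi|\xi|)^2|\widehat u|^2\le|\widehat u|^2+(2\pi|\xi|)^{2\alpha}|\widehat u|^2$, splitting into $|\xi|$ small and large. Hence $|\xi|\widehat u\in L^2$, so $u\in H^1(\mathbb{R}^n)$.

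The main obstacle is the density step, i.e.\ passing from $C_c^\infty$ test functions to Schwartz ones, so that $h=\widehat g$ can be read off on the Fourier side. An alternative is to argue directly: test with $\varphi$ whose Fourier transforms are supported away from the origin. The small-$\xi$ behavior of $h$ is harmless anyway, since $h$ is locally integrable there.
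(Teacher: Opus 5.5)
Your proposal is correct and follows the paper's proof essentially step for step: you extend \eqref{eq:weakeq} from $C_c^\infty$ to Schwartz test functions, read off $(2\pi|\xi|)^\alpha\widehat u=\widehat{f-\mu}$ as an identity of tempered distributions, and then obtain $u\in H^\alpha(\mathbb{R}^n)$, \eqref{eq:star}, and the $H^1$ claim from Parseval and the elementary bound on the symbol. Your cutoff argument with $\varphi\chi_\rho$ and $\mathcal E_\alpha(\psi)\lesssim\|\psi\|_{H^1}^2$ makes explicit the density step that the paper only asserts. Your closing ``alternative'' does not actually avoid that step, because a nonzero $C_c^\infty$ function cannot have Fourier transform vanishing near the origin.
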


\begin{proof}
Since $\mu\in L^1(\mathbb{R}^n)\cap L^\infty(\mathbb{R}^n)\subseteq L^2(\mathbb{R}^n)$, we have that $f-\mu\in L^1(\mathbb{R}^n)\cap L^2(\mathbb{R}^n)$, and thus $\widehat{f-\mu}$ is a bounded, continuous $L^2(\mathbb{R}^n)$ function. Both sides of \eqref{eq:weakeq} are continuous in $\varphi$ for the norm $\|\varphi\|_{L^1(\mathbb{R}^n)}+\|\varphi\|_{L^2(\mathbb{R}^n)}+\mathcal E_\alpha(\varphi)^{1/2}$, and $C_c^\infty(\mathbb{R}^n)$ is dense in the Schwartz class for it, so \eqref{eq:weakeq} holds for all Schwartz functions $\varphi$. Parseval then gives that 
$$
\int_{\mathbb{R}^n} G\overline{\widehat\varphi}\,d\xi=0
$$
for all such $\varphi$, where $G:=(2\pi|\cdot|)^\alpha\widehat u-\widehat{f-\mu}$ is a tempered distribution, and so $G=0$ almost everywhere. This implies that $(2\pi|\cdot|)^\alpha\widehat u\in L^2(\mathbb{R}^n)$ and hence that $(-\Delta)^{\alpha/2}u \in L^2(\mathbb{R}^n)$. Moreover, $u\in L^2(\mathbb{R}^n)$ by Lemma~\ref{lem:interp}, and so $u\in H^\alpha(\mathbb{R}^n)$. If $\alpha\ge1$ then $(1+|\xi|)\lesssim (1+|\xi|^\alpha)$, and so $u\in H^1(\mathbb{R}^n)$. The identity \eqref{eq:star} follows from Parseval's identity applied to $(f-\mu,\psi)\in L^2(\mathbb{R}^n)\times L^2(\mathbb{R}^n)$.
\end{proof}

The following lemma which asserts that $\mu\equiv \lambda$ on the positive set of $u$ is an important part of the decomposition.

\begin{lemma}\label{lem:compl}
If $f \in L^1(\mathbb{R}^n)\cap L^2(\mathbb{R}^n)$, $\lambda>0$,  $0<\alpha<2$, $\mu$ is defined as in \eqref{def:mu}, and $u\in\mathcal{K}_{\alpha}$ is the minimizer of $J_{\lambda}^{\alpha}$,  then $\eta u=0$ almost everywhere. Hence, $\mu=\lambda$ almost everywhere on $\Omega:=\{x \in \mathbb{R}^n\colon u(x)>0\}$.
\end{lemma}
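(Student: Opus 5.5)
The plan is to obtain the complementarity relation $\int_{\mathbb{R}^n}\eta u\,dx=0$ directly from the energy identity of Theorem~\ref{thm:vi}. Combined with $\eta\ge0$ and $u\ge0$ almost everywhere, this gives $\eta u=0$ almost everywhere. Consequently $\eta=0$, and hence $\mu=\lambda$, almost everywhere on $\Omega=\{u>0\}$.

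First I would extend the identity \eqref{eq:weakeq} from test functions to $\psi=u$. By Lemma~\ref{lem:reg}, the relation \eqref{eq:star} holds for every $\psi\in X_\alpha$, and $u\in X_\alpha$. Taking $\psi=u$ gives
\[
\mathcal E_\alpha(u)=\int_{\mathbb{R}^n}(f-\mu)u\,dx=\int_{\mathbb{R}^n}(f-\lambda)u\,dx+\int_{\mathbb{R}^n}\eta u\,dx,
\]
where I use $\mu=\lambda-\eta$. Every integral here is finite: $u\in L^1\cap L^2$ by Lemma~\ref{lem:interp}, $f\in L^2$, and $\eta\in L^\infty$ by Theorem~\ref{thm:cap}. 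The energy identity in Theorem~\ref{thm:vi} says $\mathcal E_\alpha(u)=\int(f-\lambda)u\,dx$. Subtracting, I get $\int_{\mathbb{R}^n}\eta u\,dx=0$.

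Since $0\le\eta$ and $u\ge0$ almost everywhere, the integrand $\eta u$ is nonnegative and has integral zero, so it vanishes almost everywhere. On $\Omega$ we have $u>0$, so $\eta=0$ there, and therefore $\mu=\lambda-\eta=\lambda$ almost everywhere on $\Omega$.

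The only delicate point is justifying that $\psi=u$ is an admissible test function. The distribution $\eta$ was originally defined only against $C_c^\infty$ functions, so the argument depends on Lemma~\ref{lem:reg} having upgraded the identity to all of $X_\alpha$ via Parseval. Once that is granted, the rest is bookkeeping. If one prefers not to rely on that lemma, an alternative is to approximate $u$ by truncations and mollifications and pass to the limit in $\mathcal E_\alpha(u,\cdot)$ and in the $L^1$ pairing with the bounded function $\eta$.
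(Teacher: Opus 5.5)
Your proposal is correct and is essentially the paper's proof. You take $\psi=u$ in \eqref{eq:star}, subtract the energy identity of Theorem~\ref{thm:vi} to get $\int_{\mathbb{R}^n}\eta u\,dx=0$ (the absolute convergence of each integral is the same justification the paper gives), and conclude from $\eta,u\ge0$ that $\eta u=0$ almost everywhere, hence $\mu=\lambda$ almost everywhere on $\Omega$.
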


\begin{proof}
Taking $\psi=u$ in Lemma~\ref{lem:reg} gives that 
$$
\mathcal E_\alpha(u)=\int_{\mathbb{R}^n}(f-\mu)u\,dx.
$$ 
On the other hand, the energy identity of Theorem~\ref{thm:vi} gives that 
$$
\mathcal E_\alpha(u)=\int_{\mathbb{R}^n}(f-\lambda)u\,dx.
$$
Noting that all integrals converge absolutely, subtracting gives $\int_{\mathbb{R}^n}\eta u \,dx =0$, which in turn implies $\eta u=0$ almost everywhere since $\eta$ and $u$ are nonnegative. This implies that if $u(x)>0$, then $\eta = 0$ up to a set of measure $0$, i.e., $\mu = \lambda$ for almost every $x\in \Omega$. 
\end{proof}

\section{The decomposition theorem}\label{decomposition}

We now prove our main decomposition theorem, Theorem~\ref{thm:decomp}. 
\begin{proof}[Proof of Theorem \ref{thm:decomp}]
Let $u \in\mathcal{K}_{\alpha}$ be the minimizer of $J_{\lambda}^{\alpha}$ given from Theorem~\ref{thm:exist} and $\mu$ be as defined in \eqref{def:mu}.  Claim (1) follows from Lemma \ref{lem:mass} and Theorem~\ref{thm:cap}, which implies that $\eta \leq \lambda$ and so $\mu \leq \lambda$. Claim (2) follows from Lemma~\ref{lem:compl}.  Claim (3) also follows from a combination of Lemma \ref{lem:mass} and Lemma~\ref{lem:compl}:
$$
\lambda|\Omega|=\int_\Omega\mu\,dx\le\|\mu\|_{L^1(\mathbb{R}^n)}=\|f\|_{L^1(\mathbb{R}^n)}.
$$

For $\alpha\geq 1$, an application of Lemma~\ref{lem:stamp} with $c=0$ applied
to $u$, which is in $H^1(\mathbb{R}^n)\subseteq W^{1,1}_{\mathrm{loc}}(\mathbb{R}^n)$ by Lemma~\ref{lem:reg}, yields the assertion $\nabla u=0$ in $\mathbb{R}^n\setminus\Omega$.
\end{proof}


\section*{Artificial Intelligence Statement} 

The proof strategy was developed by Large Language Models (LLMs), through a combination of ChatGPT (GPT-5.6 Sol), Codex CLI and web interface, mathematical reasoning agents, and Claude Opus 5.0, in dialogues with the authors. The first author utilized the second two authors’ paper \cite{SS}, and initialized a mathematical reasoning attempt that coordinated between a Sol agent, a Danus \cite{LGSWLJJCCZ2026} and a Rethlas \cite{JGJWSLCWW2026} automated mathematical reasoning agents (both deployed using OpenAI Sol agents), and a Polya’s ``How to Solve It’’ \cite{PolyaBook} style agent to search for an answer to the open question of Stein discussed in the paper. While the initial attempt failed to reach a complete solution and only produced partial results, further dialogue with ChatGPT Sol web interface yielded an attempted complete solution involving variational inequalities on the torus and a transference principle that was significantly more complicated than the proof presented.  The second and third authors discussed this idea to assess its feasibility and concluded that it should be possible to prove the result directly in Euclidean space.  The second author prompted Claude Opus 5.0 to try this, and the response was a longer document that provided the basis for the proof idea used in the present paper.  The second and third authors then checked and rewrote the proofs where we felt the presentation was unnatural, performed a literature review to ensure proper citation of the ideas utilized by LLMs as far as possible, and wrote an introduction according to our perspective of how the result can be placed in the context of relevant literature.

The authors have independently verified, validated, and rewritten all parts of the paper influenced by LLM-generated material and take full responsibility for the mathematical content of the paper.\\

This work was supported in part by OpenAI API credits provided by Clemson University and administered by Clemson University Research Computing and Data (RCD).


\end{document}